\newtheorem{theorem}{Theorem}[section]
\newtheorem{corollary}[theorem] {Corollary}
\newtheorem{definition}[theorem]{Definition}
\newtheorem{lemma} [theorem]{Lemma}
\newtheorem{remark}[theorem]{Remark}
\numberwithin{equation}{section}
\newcommand{\C}{\mathbb{C}}
\newcommand{\G}{{\mathbf{G}}}
\newcommand{\K}{\textsf{K}}
\newcommand{\g}{ \mathfrak{g}}
\newcommand{\so}{\mathbf I(\sigma)}
\renewcommand{\r}{\mathbb S}
\newcommand{\GL}{{\rm GL}}
\newcommand{\SL}{{\rm SL}}
\newcommand{\Hom}{{\rm Hom}}
\newcommand{\Ind}{{\rm Ind}}
\newcommand{\Irr}{{\rm Irr}}
\newcommand{\Ol}{\mathfrak{o}_{\ell}}
\newcommand{\OO}{\mathfrak{o}}
\newcommand{\sfK}{\mathsf{K}}
\newcommand{\cO}{\mathfrak{o}}
\newcommand{\tC}{\mathrm{C}}
\newcommand{\lri}{{\cO}}
\newcommand{\xx}{{\bar{x}}}
\newcommand{\kk}{\mathbf k}
\renewcommand{\sl}{\mathrm{sl}}
\renewcommand{\wp}{\mathfrak{p}}
\title[A multiplicity one theorem ]{A multiplicity one theorem for groups of \\ type $A_n$ over discrete valuation rings}
\author{Shiv Prakash Patel}
\address{SPP : Department of Mathematics\\ Indian Institute of Technology Delhi \\ Hauz Khas \\ New Delhi - 110016, INDIA.}
\email{shivprakashpatel@gmail.com}
\author{Pooja Singla}
\address{PS : Department of Mathematics\\ Indian Institute of Science (IISc) \\ Bangalore - 560012 \\ INDIA.}
\email{pooja@iisc.ac.in}
\date{\today}
\subjclass[2010]{Primary 20C15; Secondary 20G05, 20G25, 15B33}
\keywords{Whittaker model, regular representations, multiplicity one, Gelfand-Graev model}
\begin{document}

\begin{abstract}

Let $\cO$ be the ring of integers of a non-archimedean local field with the maximal ideal $\wp$ and the finite residue field of characteristic $p.$ Let $\G$ be the General Linear or Special Linear group with entries from the finite quotients $\cO/\wp^\ell$ of $\cO$ and $\mathbf{U}$ be the subgroup of $\G$ consisting of upper triangular unipotent matrices. We prove that the induced representation $\Ind^{\G}_{\mathbf{U}}(\theta)$ of $\G$ obtained from a {\it non-degenerate} character $\theta$ of $\mathbf{U}$ is multiplicity free for all $\ell \geq 2.$ This is analogous to the multiplicity one theorem regarding Gelfand-Graev representation for the finite Chevalley groups. We prove that for many cases the regular representations of $\G$ are characterized by the property that these are the constituents of the induced representation $\Ind^{\G}_{\mathbf{U}}(\theta)$ for some non-degenerate character $\theta$ of $\mathbf{U}$. We use this to prove that the restriction of a regular representation of General Linear groups over $\cO/\wp^\ell$ to the Special Linear groups is multiplicity free for all $\ell \geq 2$ and also obtain the corresponding branching rules in many cases.
\end{abstract}

\maketitle

\section{Introduction}
\label{sec: introduction}

Let $\mathbf F$ be a non-archimedean local field and $\OO$ be the ring of integers of $\mathbf F$ such that the residue field $\mathbf k$ is finite of characteristic $p$ and $| \mathbf{k} | =q.$
Let $\mathfrak{p}$ be the unique maximal ideal of $\OO$ and $\varpi$ be a fixed generator of $\mathfrak{p}.$
Let $\G$ be a split reductive  group scheme defined over $\cO$ and $\G(\cO)$ be the set of $\OO$-points of $\G.$ For $\ell \geq 1,$ let $\OO_{\ell} = \OO/ (\varpi^{\ell}).$ The groups $\G(\cO)$ are pro-finite therefore every complex continuous finite dimensional irreducible representation of $\G(\cO)$ factors through the quotient $\G(\cO_\ell)$ for some $\ell.$ 
So it suffices to consider the finite dimensional complex representations of the groups $\G(\cO_\ell)$. 
Let $\mathbf{U}$ be the unipotent radical of a Borel subgroup $\mathbf{B}$ of $\G.$ Fix $\theta : \mathbf{U}(\cO_\ell) \rightarrow \C^{\times}$  a one dimensional representation of $\mathbf{U}(\cO_\ell).$ 

An irreducible representation $\pi$ of the group $\G(\cO_\ell)$ is said to admit a $\theta$-Whittaker model if the dimension of the intertwiner space $\Hom_{\G(\cO_\ell)} \left(\pi, \Ind_{\mathbf{U}(\cO_\ell)}^{\G(\cO_\ell)} (\theta) \right)$ is non-trivial and  $\pi$ is said to admits at most one $\theta$-Whittaker model if the intertwiner space $\Hom_{\G(\cO_\ell)} \left(\pi, \Ind_{\mathbf{U}(\cO_\ell)}^{\G(\cO_\ell)} (\theta) \right)$ has dimension at most one. In view of this, the representation $\Ind_{\mathbf{U}(\cO_\ell)}^{\G(\cO_\ell)} (\theta)$ is multiplicity free if and only if every irreducible representation of $\G(\cO_\ell)$ admits at most one $\theta$-Whittaker model. It is natural to ask which irreducible representations of $\G(\cO_\ell) $ admit a $\theta$-Whittaker model? More specifically, to determine the dimension of the intertwiner space $\Hom_{\G(\cO_\ell)} \left(\pi, \Ind_{\mathbf{U}(\cO_\ell)}^{\G(\cO_\ell)} (\theta)\right)$ for any irreducible representation $\pi$ of $\G(\cO_\ell)$.

These questions have already appeared in literature in many contexts. For example, for $\ell = 1$ and $\G(\mathbb F_q) = \GL_n(\mathbb F_q),$  one knows from the work of Gelfand and Graev~\cite{MR0148765, MR0272916} that for a non-degenerate character $\theta$ of $\mathbf{U}(\mathbb F_q),$ the induced representation $\mathrm{Ind}_{\mathrm{U}(\mathbb F_q)}^{\GL_n(\mathbb F_q)}(\theta)$ (now a days also called Gelfand-Graev representation)  is multiplicity free and further every cuspidal representation of $\GL_n(\mathbb F_q)$ has a $\theta$-Whittaker model for any non-degenerate character $\theta$ of $\mathbf{U}(\mathbb F_q)$.  They generalized these results further for finite Chevalley groups. These results turn out to be very important because the cuspidal representations are known to be the building blocks for all irreducible ones in these cases via the Harish-Chandra induction so in a way the essential irreducible representations are captured by the Gelfand-Graev representation. For $\GL_n(\mathbf F),$ where $\mathbf F$ is a local field, Shalika~\cite[Theorem 1.6, 2.1]{Shalika74} proved that every (smooth) irreducible representation has at most one $\theta$-Whittaker model for non-degenerate $\theta$ and generalized this to split and quasi-split groups as well. The case of local fields has several applications, for example certain integrals involving functions in the Whittaker model gives rise to the local $L$-function \cite[Theorem 2.18]{JL1970} and these local $L$-functions are then used to define global $L$-functions \cite{JL1970, Bump97}. Our focus is on the corresponding questions for the groups $\G(\cO_\ell),$ for $\G = \GL_n$ or $ \SL_n.$ It is to be noted that the construction of all irreducible representations of $\G(\cO_\ell)$ are not yet known even for $\mathrm{Char}(\cO) = 0$ case, see~\cite{AKOV1, AKOV2} for few results in $\mathrm{Char}(\cO) = 0$ case.

Let $\G = \GL_n$ or $\SL_n$ and $\mathbf{U}(\Ol)$ be the group of upper triangular unipotent matrices in $\G(\Ol).$ A non-trivial one dimensional representation $\varphi : \Ol \rightarrow \C^{\times}$ such that $\varphi|_{\varpi^{\ell -1} \Ol} \neq 1$ is called a primitive character of $\cO_\ell.$ For any $n-1$ primitive characters $\varphi_1, \varphi_2, \ldots, \varphi_{n-1}$ of $\cO_\ell$, define a one dimensional representation $\theta_{(\varphi_1, \varphi_2, \ldots, \varphi_{n-1})} : \mathbf{U}(\cO_\ell) \rightarrow \C^{\times}$ by
\begin{equation}
\theta_{(\varphi_1,\varphi_2 , \ldots, \varphi_{n-1})}((x_{ij})) := \varphi_1 (x_{12}) \varphi_2 (x_{23}) \cdots  \varphi_{n-1}(x_{(n-1)n}).
\end{equation}

Any one dimensional representation of $\mathbf{U}(\cO_\ell)$ of the form $\theta_{(\varphi_1, \varphi_2, \ldots, \varphi_{n-1})}$ corresponding to the primitive characters $\varphi_1, \varphi_2, \ldots, \varphi_{n-1}$  of $\cO_\ell$ will be called a ``non-degenerate character" of $\mathbf{U}(\cO_\ell).$ This is a natural generalization of the well known notion of non-degenerate character of $\mathbf{U}(\mathbb F_q)$ for $\mathbf{U}(\cO_\ell)$. For a non-degenerate character $\theta$ of $\mathbf{U}(\cO_\ell),$  we consider the $\G(\cO_\ell)$-representation space $\mathrm{Ind}_{\mathbf{U}(\cO_\ell)}^{\G(\cO_\ell)}(\theta)$ and prove the following. 
\begin{theorem}
\label{thm:main-theorem}
For $\G = \GL_n$ or $\SL_n,$ the $\G(\cO_\ell)$-representation space  $\mathrm{Ind}_{\mathbf{U}(\cO_\ell)}^{\G(\cO_\ell)}(\theta)$ is multiplicity free for every non-degenerate character $\theta$ of $\mathbf{U}(\cO_\ell).$ 
\end{theorem}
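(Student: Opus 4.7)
The plan is to apply Gelfand's classical trick: exhibit an anti-involution $\sigma$ of $\G(\cO_\ell)$ that preserves the pair $(\mathbf{U}(\cO_\ell), \theta)$, and deduce that the Hecke algebra
\[
\HA_\theta := e_\theta \, \C[\G(\cO_\ell)] \, e_\theta, \qquad e_\theta = \frac{1}{|\mathbf{U}(\cO_\ell)|}\sum_{u \in \mathbf{U}(\cO_\ell)} \theta(u^{-1}) u,
\]
is commutative. Commutativity of $\HA_\theta$ forces every irreducible constituent of $\Ind^{\G(\cO_\ell)}_{\mathbf{U}(\cO_\ell)}(\theta)$ to occur with multiplicity at most one, which is precisely the theorem.

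First I would take $\sigma(g) = w_0 \, g^t \, w_0^{-1}$, where $w_0$ is the antidiagonal Weyl element and $g^t$ the matrix transpose. This is clearly an anti-involution, and a direct computation shows $\sigma(\mathbf{U}(\cO_\ell)) = \mathbf{U}(\cO_\ell)$, with simple-root entries permuted by $u_{i,i+1} \mapsto u_{n-i, n-i+1}$. The condition $\theta \circ \sigma = \theta$ then amounts to the palindromic requirement $\varphi_i = \varphi_{n-i}$, which does not hold in general. I would remedy this by conjugating $\sigma$ by a diagonal element $t_\theta \in \mathbf{T}(\cO_\ell)$: since $\mathbf{T}(\cO_\ell)$-conjugation rescales each $\varphi_i$ by an independent unit, an appropriate $t_\theta$ produces a new anti-involution $\sigma_\theta(g) := t_\theta \sigma(g) t_\theta^{-1}$ that still preserves $\mathbf{U}(\cO_\ell)$ and now satisfies $\theta \circ \sigma_\theta = \theta$.

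The heart of the proof is showing that $\sigma_\theta$ acts as the identity on $\HA_\theta$. Equivalently, for each $g \in \G(\cO_\ell)$ one must exhibit $u_1, u_2 \in \mathbf{U}(\cO_\ell)$ with $\sigma_\theta(g) = u_1 g u_2$ and $\theta(u_1 u_2) = 1$, so that the $(\mathbf{U}(\cO_\ell), \theta)$-Hecke element supported on the double coset $\mathbf{U}(\cO_\ell) g \mathbf{U}(\cO_\ell)$ is $\sigma_\theta$-fixed. Over the residue field this is a direct consequence of the Bruhat decomposition $\G(\kk) = \bigsqcup_{w \in W} \mathbf{U}(\kk) \mathbf{T}(\kk) w \mathbf{U}(\kk)$; over $\cO_\ell$ no such clean decomposition exists, and one must instead analyze the double-coset structure via the principal congruence filtration.

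The main obstacle is therefore this double-coset analysis, and I would address it by induction on $\ell$. The base case $\ell = 1$ is the classical Gelfand--Graev theorem for $\G(\kk)$. For the inductive step I would employ Clifford theory relative to the abelian congruence kernel $K^{\cll} = \ker\bigl(\G(\cO_\ell) \to \G(\cO_{\cll})\bigr)$: decomposing $\Ind^{\G(\cO_\ell)}_{\mathbf{U}(\cO_\ell)}(\theta)$ according to the characters of $K^{\cll}$ that appear, a stabilizer analysis should reduce the multiplicity problem on $\G(\cO_\ell)$ to a multiplicity problem on a proper subquotient of the form $\G(\cO_{\fll})$, where the inductive hypothesis applies. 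The $\SL_n$ case should then follow from the $\GL_n$ case by further Clifford theory, exploiting that $\SL_n(\cO_\ell) \trianglelefteq \GL_n(\cO_\ell)$ and that the restriction of a non-degenerate character of $\mathbf{U}(\cO_\ell)$ to the unipotent radical of $\SL_n(\cO_\ell)$ remains non-degenerate.
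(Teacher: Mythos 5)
Your overall strategy---Gelfand's trick via an anti-involution $\sigma_\theta$ preserving $(\mathbf{U}(\cO_\ell),\theta)$---stalls at precisely the point you yourself identify as ``the heart of the proof,'' and the inductive device you propose to get past it does not close. To conclude commutativity of $\HA_\theta$ you must show, for every $g\in\G(\cO_\ell)$ whose double coset supports a nonzero Hecke element, that $\sigma_\theta(g)=u_1gu_2$ with $\theta(u_1u_2)=1$; this requires genuine control of $\mathbf{U}(\cO_\ell)\backslash\G(\cO_\ell)/\mathbf{U}(\cO_\ell)$, which for $\ell\geq 2$ is known only for $\GL_2$ and $\GL_3$ and is believed to be quite complicated in general (there is no Bruhat-type decomposition over $\cO_\ell$). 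Your proposed remedy---induction on $\ell$ using Clifford theory relative to $\K_\ell^{\cll}$---does not reduce the problem to ``a multiplicity problem on $\G(\cO_{\fll})$'': the stabilizers of the relevant characters $\varphi_x$ of $\K_\ell^{\cll}$ are groups of the form $\tC_{\G(\cO_\ell)}(\tilde x)\K_\ell^{m}$, not quotients $\G(\cO_{\fll})$, and the representations of these stabilizers occurring in $\mathrm{Ind}_{\mathbf{U}(\cO_\ell)}^{\G(\cO_\ell)}(\theta)$ are not of Gelfand--Graev type, so the inductive hypothesis has nothing to apply to. As written, the central step is asserted rather than proved, and it is exactly the step that is open.

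For contrast, the paper deliberately avoids Gelfand's trick. It first shows (via Mackey theory and the explicit construction of regular representations) that every $a$-regular representation admits a $\theta_a$-Whittaker model, then counts: the sum of the dimensions of all inequivalent $a$-regular representations equals $[\G(\cO_\ell):\mathbf{U}(\cO_\ell)]=\dim\mathrm{Ind}_{\mathbf{U}(\cO_\ell)}^{\G(\cO_\ell)}(\theta_a)$, which forces the induced representation to be exactly the multiplicity-free sum of these representations. Two smaller points: your torus-conjugation fix for the palindromic condition on $\theta$ works for $\GL_n$ (any two primitive characters of $\cO_\ell$ differ by a unit scaling) but needs care for $\SL_n$, where the torus is smaller; and your final reduction of $\SL_n$ to $\GL_n$ by induction in stages is sound and is in fact the same device the paper uses for $\SL_n$ with $p\mid n$ or $p=2$.
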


The proof of this theorem is included in Section~\ref{sec:proof-of-multiplicity-one}. 
The above questions for the groups $\GL_n (\OO_{\ell})$ with $\ell \geq 2$ have already appeared in the work of Hill~\cite[Proposition~5.7]{MR1334228} where he proved that the space $\mathrm{Ind}_{\mathbf{U}(\cO_\ell)}^{\GL_n(\cO_\ell)}(\theta)$ is multiplicity free for even $\ell$ and for $\ell$ odd Hill~\cite[Proposition~5.7]{MR1334228} managed to prove that certain class of regular representations of $\GL_n(\cO_\ell)$ admits at most one $\theta$-Whittaker model.  We briefly recall the definition of regular representations of $\G(\cO_\ell),$ for more details, see Section~\ref{sec:regular-representation-construction} and also \cite{MR3737836}. We will define and use the term regular representations of $\SL_n(\cO_\ell)$ only under the conditions $(p, 2) = (p, n) = 1$. In general the definition of regular representations for $\SL_n(\cO_\ell)$ is slightly complicated than the Definition~\ref{def:regular-over-field} and we will avoid the general one in this article. 
 
 Let $\G(\cO_\ell)$ be either $\SL_{n}(\cO_\ell)$ with $(p, 2) = (p, n) = 1$ or $\GL_{n}(\cO_\ell).$ Let $\g$ denote the Lie algebra scheme of $\G$. For every positive integer $r,$ let $\K_{\ell}^r= \mathrm{Ker}(\G(\cO_\ell) \mapsto \G(\cO/\wp^r))$ be the $r$-th principal congruence subgroup of $\G(\cO_\ell).$ There are $\G(\kk)$-equivariant isomorphisms $\K_\ell^r/\K_\ell^{r+1} \cong (\g(\kk), +)$ and a $\G(\kk)$-equivariant isomorphism $x \mapsto  \varphi_x$ between $\g(\kk)$ and its Pontryagin dual $\g(\kk)^\vee= \mathrm{Hom}_{\mathbb Z}(\g(\kk), \mathbb C^\times)$ (see \cite[Section~2.3]{MR3737836}). 
An element $x \in \g(\kk)$ is called regular or cyclic  if the characteristic polynomial of $x$ is equal to its minimal polynomial. An element $x \in \g(\kk)$ is called cuspidal if the characteristic polynomial of $x$ is irreducible. A one dimensional representation $ \varphi_x \in \g(\kk)^\vee$ is called regular (cuspidal) if $x$ is a regular (cuspidal) element.  
\begin{definition}\label{def:regular-over-field} [Regular (Cuspidal) representations]
 An irreducible representation $\rho$ of $\G(\cO_\ell)$ is called regular (cuspidal)  if the orbit of its restriction to $\K_\ell^{\ell-1}  \cong \g(\kk)$ consists of regular (cuspidal) one dimensional representations. 
\end{definition} 
It is to be noted that every cuspidal representation of $\G(\cO_\ell)$ is regular but the converse is not true for any $n \geq 2$. A construction of all cuspidal representations of $\GL_n(\cO_\ell)$ was given by Aubert et al.~\cite{MR2607551}. 
For odd $p$ and $\ell \geq 2$, the construction of all regular representations for $\SL_{n}(\OO_{\ell})$ with $p \nmid n$ and for all $\GL_{n}(\OO_{\ell})$ was obtained in \cite{MR3737836}. For general $p,$ an independent construction of regular representations of $\GL_{n}(\OO_{\ell})$ is given by Stasinski-Stevens~\cite{MR3743488}. 
The regular representations for ''good" characteristic of the residue field, have also been constructed when $\G$ is a unitary, orthogonal or symplectic group, see \cite{MR3737836, Shai-regular}. The case of $\G = \SL_n$ for $p \mid n$ is not yet completely known even for $n=2,$ see~\cite{Hassain-Singla-sl2} for few results regarding this. In this article, we complete Hill's results for $\GL_n(\cO_\ell)$ with $\ell$ odd as well as generalise these to $\SL_n(\cO_\ell)$ for $p \nmid n.$  In particular, we prove the following. 
\begin{theorem}
	\label{main-theorem-2}
	Let $\ell \geq 2$, and $\G(\cO_\ell)$ be either $\SL_{n}(\cO_\ell)$ with $(p, 2) = (p, n) = 1$ or $\GL_{n}(\cO_\ell).$ Then the following are true. 
	\begin{enumerate} 
		\item An irreducible representation of $\G(\Ol)$ admits a $\theta$-Whittaker model for some non-degenerate character $\theta$ of $\mathbf U(\cO_\ell)$ if and only if it is regular. 
        \item For every non-degenerate character $\theta$ of $\mathbf U(\cO_\ell)$, every irreducible representation of $\G(\Ol)$ admits at most one $\theta$-Whittaker model.
	\end{enumerate}
\end{theorem}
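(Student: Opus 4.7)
The proof of Part (2) is immediate from Theorem~\ref{thm:main-theorem} via Frobenius reciprocity: for any irreducible representation $\pi$ of $\G(\cO_\ell)$,
\[
\dim\Hom_{\G(\cO_\ell)}\!\big(\pi,\Ind_{\mathbf{U}(\cO_\ell)}^{\G(\cO_\ell)}(\theta)\big)\;=\;\dim\Hom_{\mathbf{U}(\cO_\ell)}(\pi,\theta),
\]
and the multiplicity-freeness of the Gelfand--Graev representation bounds the right-hand side by $1$.

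For Part (1), my plan for the $(\Rightarrow)$ direction is a direct computation on the last abelian congruence layer. Suppose $\pi$ carries a $\theta$-Whittaker functional. Restricting it to $\K_\ell^{\ell-1}\cap\mathbf{U}(\cO_\ell)$, which under the isomorphism $\K_\ell^{\ell-1}\cong\g(\kk)$ corresponds to the strictly upper triangular matrices $\mathfrak{u}(\kk)$, and using that $\K_\ell^{\ell-1}$ is abelian so that $\pi|_{\K_\ell^{\ell-1}}$ decomposes into characters $\varphi_y$, one obtains some $y\in\g(\kk)$ in the support orbit with $\varphi_y|_{\mathfrak{u}(\kk)}=\theta|_{\mathfrak{u}(\kk)}$. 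Writing $\varphi_y(X)=\psi(\tr(yX))$ for a fixed non-trivial additive character $\psi$ of $\kk$, this equality forces $y$ to have zero entries strictly below the first sub-diagonal and to carry the primitive characters $\varphi_i$ defining $\theta$ in the positions $(i+1,i)$. Non-degeneracy makes these sub-diagonal entries units in $\kk$, so $y$ is an unreduced upper Hessenberg matrix, admits $e_1$ as a cyclic vector, and is therefore regular. Hence $\pi$ is regular.

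For the $(\Leftarrow)$ direction, my strategy is a dimension count that combines the forward direction with Theorem~\ref{thm:main-theorem}. Writing
\[
\frac{|\G(\cO_\ell)|}{|\mathbf{U}(\cO_\ell)|}\;=\;\dim\Ind_{\mathbf{U}(\cO_\ell)}^{\G(\cO_\ell)}(\theta)\;=\;\sum_{\pi} m(\pi,\theta)\dim\pi,\qquad m(\pi,\theta)\in\{0,1\},
\]
the $(\Rightarrow)$ direction constrains the sum to run over regular $\pi$ only, so $\sum_{\pi\text{ regular}}\dim\pi\ \geq\ |\G(\cO_\ell)|/|\mathbf{U}(\cO_\ell)|$. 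The reverse inequality is established using the explicit dimension formula and parametrization of regular representations from \cite{MR3737836,MR3743488}, together with a count of regular $\G(\kk)$-orbits on $\g(\kk)$ and the sizes of their stabilizers. Equality forces $m(\pi,\theta)=1$ for every regular $\pi$.

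The hardest step will be the dimension count for odd $\ell$, where a regular representation is built via a Heisenberg--Weil extension across the middle congruence level, so its dimension depends on the symplectic structure on the appropriate quotient of $\g(\kk)$. For $\SL_n$ with $(p,2)=(p,n)=1$, the count will be handled by relating regular $\SL_n(\cO_\ell)$-representations to their $\GL_n(\cO_\ell)$ counterparts via Clifford theory and transferring the dimension identity from the linear to the special linear side.
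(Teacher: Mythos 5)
Your argument for the forward implication of (1) is correct and is actually more direct than the paper's: restricting $\pi$ to $\K_\ell^{\ell-1}\cap\mathbf U(\cO_\ell)\cong\mathfrak u(\kk)$ and reading off from the trace pairing that some $\varphi_y$ in the orbit has $y$ an unreduced Hessenberg matrix, hence cyclic, is essentially Lemma~\ref{lem:uppertriangular-regular}; the paper only obtains this implication as a by-product of its dimension count. The rest of the proposal, however, is circular. Part (2) is, by the definition recalled in the introduction, \emph{equivalent} to the statement that $\Ind_{\mathbf U(\cO_\ell)}^{\G(\cO_\ell)}(\theta)$ is multiplicity free, i.e.\ to Theorem~\ref{thm:main-theorem} for these groups, and the paper deduces Theorem~\ref{thm:main-theorem} from Theorem~\ref{main-theorem-2}, not conversely. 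You invoke Theorem~\ref{thm:main-theorem} both to ``prove'' (2) and to get $m(\pi,\theta)\in\{0,1\}$ in your dimension count, but you never prove it. Without the a priori bound $m\le 1$, the equality of $\sum_\pi m(\pi,\theta)\dim\pi$ with $\sum_{\pi\,\mathrm{regular}}\dim\pi$ does not force $m\equiv 1$ on regular representations (some multiplicities could be $0$ and others $2$). The paper closes this by proving the \emph{lower} bound $m(\pi,\theta_a)\ge 1$ for every $\rho_{\ell,m}(a)$-regular $\pi$: Mackey theory reduces this to $\Hom_{\mathbf U(\varpi^m\cO_\ell)}(\widetilde\sigma,\theta_a)\neq 0$, which is verified in Lemmas~\ref{lem:intersection} and \ref{lem:decomposition-of-sigma} using $I(\sigma)\cap\mathbf U(\cO_\ell)=\mathbf U(\varpi^m\cO_\ell)$ and, for odd $\ell$, the fact that $\sigma|_{\mathbf U(\varpi^m\cO_\ell)}$ is the multiplicity-free sum of all extensions of $\varphi_x|_{\mathbf U(\varpi^{m+1}\cO_\ell)}$. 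The dimension identity then forces $m=1$ on that set and $0$ elsewhere, yielding (1), (2) and Theorem~\ref{thm:main-theorem} simultaneously; some such lower-bound construction is indispensable to your plan as well.

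Separately, your key identity $\sum_{\pi\,\mathrm{regular}}\dim\pi=|\G(\cO_\ell)|/|\mathbf U(\cO_\ell)|$, taken over \emph{all} regular representations, is false for $\SL_n$: Section~\ref{sec:examples} computes for $\SL_2(\cO_\ell)$ that this sum strictly exceeds the index. The correct statement requires the finer partition into $\rho_{\ell,m}(a)$-regular representations (Definition~\ref{def:regular-representation}): for a fixed $\theta_a$ only those occur in $\Ind(\theta_a)$, and only their dimensions sum to the index (Theorem~\ref{thm: existence}). This is not a cosmetic point, because for $\SL_n$ a regular representation is $a$-regular only for certain $a$, so it acquires a $\theta$-Whittaker model only for \emph{some} non-degenerate $\theta$, exactly as statement (1) asserts; your version of the count would prove the ``for every $\theta$'' statement, which Theorem~\ref{prop:special-regular-characterization} shows is false for non-special-regular representations.
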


This theorem follows from a more general result, see Theorem~\ref{thm: existence}, and its proof is included in Section~\ref{sec:proof-of-whittaker-model}. The proofs of these results depend on the description and the construction of the regular representations as given in \cite{MR3737836, MR3743488}, see also Section~\ref{sec:regular-representation-construction} where we recall briefly this construction. The proof of the second part of Theorem~\ref{main-theorem-2} is quite indirect. Usually for a proof of multiplicity one theorem for a $G$-representation $V = \mathrm{Ind}_H^G(\delta)$, where $H $ is a subgroup of $G$ and $\delta$ is a representation of $H$, one expects to use the Gelfand's trick.  This trick relies on defining a good involution on $G$ which in turn fixes all the functions $f \in \mathrm{End}_G(V)$, where $f$ is considered as a function on the double cosets $H\backslash G /H$. Therefore to use this trick, one requires a good description of double cosets $\mathbf{U}(\Ol) \backslash \G(\Ol) /\mathbf{U}(\Ol).$ In the present case such a description is known only for $\G = \GL_2(\cO_\ell)$ and $\GL_3(\cO_\ell)$ (see \cite{MR2267575}) and is non-trivial even for $\GL_3(\cO_\ell)$. The description of double cosets $\mathbf{U}(\Ol) \backslash \G(\Ol) /\mathbf{U}(\Ol)$ is not known in general and is believed to be quite complicated. Therefore our proof completely avoids the approach using Gelfand's trick and is more towards generalising the ideas of Hill. For the proof, we count the sum of dimensions of few specific regular representations of $\G(\cO_\ell)$. This is achieved by carefully following the steps in the construction of the regular representations. In our case, it turns out that this sum is equal to the dimension of $\Ind_{\mathbf{U}(\Ol)}^{\G(\Ol)}(\theta).$ 

As a corollary of the proof of Theorem~\ref{main-theorem-2}, we obtain the following result of Hill\cite[Proposition~5.7]{MR1334228}, see the discussion preceding Theorem~\ref{thm: existence}.  

\begin{corollary}
\label{cor:hill-regular-iff-whittaker}
For $\G(\cO_\ell) = \GL_n(\cO_\ell)$ and a non-degenerate character $\theta$ of $\mathbf U(\cO_\ell)$, an irreducible representation of $\G(\Ol)$ admits a $\theta$-Whittaker model if and only if it is regular. 
\end{corollary}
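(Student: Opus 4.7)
The plan is to obtain Corollary~\ref{cor:hill-regular-iff-whittaker} as an immediate specialization of Theorem~\ref{main-theorem-2}(1) to the group $\G = \GL_n$. The statement of Theorem~\ref{main-theorem-2} imposes no residue-characteristic hypothesis in the $\GL_n$ case, so the corollary is literally the $\GL_n$ instance of part (1). Thus the real content lies in Theorem~\ref{main-theorem-2}, and the proposal here is to record how the corollary falls out of its proof rather than to re-prove the full theorem.

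The argument for Theorem~\ref{main-theorem-2}(1) that I would deploy goes via a dimension count. Fix a non-degenerate character $\theta$ of $\mathbf U(\cO_\ell)$ and consider $\Ind_{\mathbf U(\cO_\ell)}^{\G(\cO_\ell)}(\theta)$. First I would show that every regular irreducible $\pi$ of $\GL_n(\cO_\ell)$ admits a $\theta$-Whittaker model: using the explicit construction of regular representations recalled in Section~\ref{sec:regular-representation-construction} (following \cite{MR3737836, MR3743488}), each such $\pi$ is built up by induction starting from a character $\varphi_x$ of $\K_\ell^{\ell-1} \cong \g(\kk)$ attached to a regular element $x \in \g(\kk)$, and a Mackey/Frobenius reciprocity analysis would give nontriviality of $\Hom_{\G(\cO_\ell)}(\pi, \Ind_{\mathbf U(\cO_\ell)}^{\G(\cO_\ell)}(\theta))$. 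Second, summing $\dim \pi$ over regular $\pi$ weighted by the corresponding Whittaker multiplicities yields, via the construction data, exactly $[\G(\cO_\ell) : \mathbf U(\cO_\ell)] = \dim \Ind_{\mathbf U(\cO_\ell)}^{\G(\cO_\ell)}(\theta)$. Combining this equality with the multiplicity-one bound of Theorem~\ref{thm:main-theorem} forces every constituent of $\Ind_{\mathbf U(\cO_\ell)}^{\G(\cO_\ell)}(\theta)$ to be regular, which gives the ``only if'' direction of the corollary.

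The main obstacle is the dimension count itself. One needs to enumerate the $\G(\kk)$-orbits of regular characters $\varphi_x \in \g(\kk)^\vee$, control the dimensions of the associated regular representations via their stabilizers (Levi-type subgroups determined by the characteristic polynomial of $x$), and verify the precise combinatorial identity against $[\G(\cO_\ell) : \mathbf U(\cO_\ell)]$. Hill handled the case of even $\ell$ in \cite[Proposition~5.7]{MR1334228}; the delicate point is odd $\ell$, where one must work at the intermediate level $\K_\ell^{\lceil \ell/2 \rceil}$ and analyse Heisenberg-type behaviour of the characters in play. It is precisely the refined construction machinery of \cite{MR3737836, MR3743488} that makes the odd case tractable and allows the $\GL_n(\cO_\ell)$ picture to be closed uniformly in $\ell$.
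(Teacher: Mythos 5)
Your opening claim --- that the corollary is ``literally the $\GL_n$ instance of part (1)'' of Theorem~\ref{main-theorem-2} --- is where the gap lies. Theorem~\ref{main-theorem-2}(1) says a representation is regular if and only if it admits a $\theta$-Whittaker model for \emph{some} non-degenerate $\theta$, whereas Corollary~\ref{cor:hill-regular-iff-whittaker} fixes an arbitrary non-degenerate $\theta$ in advance; the implication ``regular $\Rightarrow$ admits a $\theta$-Whittaker model for \emph{this} $\theta$'' does not follow from the existential statement. The paper accordingly deduces the corollary not from Theorem~\ref{main-theorem-2} but from the finer, $a$-parametrized Theorem~\ref{thm: existence} (a representation admits a $\theta_a$-Whittaker model iff it is $\rho_{\ell,m}(a)$-regular), together with the $\GL_n$-specific fact you never invoke: by Lemma~\ref{lem:about-regular-elements}(2), every regular conjugacy class in $M_n(\cO_m)$ contains an $a$-regular element for \emph{every} unit $a$, since $\diag(a,1,\ldots,1)$ conjugates a $1$-regular element to an $a$-regular one and lies in $\GL_n(\cO_m)$ (but not in $\SL_n(\cO_m)$). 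Hence for $\GL_n$ the notions ``regular'' and ``$\rho_{\ell,m}(a)$-regular'' coincide for all $a$, which is exactly what closes the quantifier gap --- and exactly what fails for $\SL_n$, where the corollary is false (Section~\ref{sec:examples}).

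The remainder of your sketch --- showing each regular $\pi$ pairs nontrivially with $\Ind_{\mathbf U(\cO_\ell)}^{\G(\cO_\ell)}(\theta_a)$ via Mackey theory and Frobenius reciprocity, then matching the sum of dimensions against $[\G(\cO_\ell):\mathbf U(\cO_\ell)]$ --- is indeed the paper's route to Theorem~\ref{thm: existence}, so the machinery is right in outline. But note that the Mackey step is precisely where the matching of $\theta_a$ with the element $x$ enters: one needs the restriction of the relevant extension of $\varphi_x$ to $\mathbf{U}(\varpi^m\cO_\ell)$ to agree with $\theta_a$, which requires choosing an $a$-regular representative $x$ in each orbit (Lemmas~\ref{lem:stab-unipotent-intersection} and~\ref{lem:intersection}); glossing over this hides the same issue as above. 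Finally, invoking Theorem~\ref{thm:main-theorem} to force the constituents to be regular is unnecessary and presentationally backwards: once every $a$-regular representation occurs with multiplicity at least one and the dimensions sum to $\dim \Ind_{\mathbf U(\cO_\ell)}^{\G(\cO_\ell)}(\theta_a)$, both multiplicity one and the absence of non-regular constituents follow simultaneously --- this is how the paper proves Theorem~\ref{thm:main-theorem} in the first place.
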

The above result is not true for $\SL_n(\cO_\ell)$, see Sections~\ref{sec:examples} and \ref{sec:branching-rules}. We characterize the regular representations of $\SL_{n}(\cO_\ell)$ for $(p, 2) = (p, n) = 1$ that admit a $\theta$-Whittaker model for every non-degenerate character $\theta$ of $\mathbf U(\cO_\ell)$, see Theorem~\ref{prop:special-regular-characterization}. From Theorem~\ref{thm:main-theorem}, we also obtain the following result. 

\begin{theorem}
\label{thm:multiplicity-free-induction}
Let $\rho$ be a regular representation of $\GL_n(\cO_\ell)$ then the restriction of $\rho$ to  $\SL_n(\cO_\ell)$  is multiplicity free. 
\end{theorem}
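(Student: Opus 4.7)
The strategy combines Clifford theory (applied to the normal inclusion $\SL_n(\cO_\ell) \triangleleft \GL_n(\cO_\ell)$) with the uniqueness of Whittaker models already established for $\GL_n(\cO_\ell)$. First, I would invoke Clifford's theorem: since $\GL_n(\cO_\ell)/\SL_n(\cO_\ell) \cong \cO_\ell^\times$ is finite and abelian, for any irreducible representation $\rho$ of $\GL_n(\cO_\ell)$ the restriction decomposes as
\[
\rho\big|_{\SL_n(\cO_\ell)} \;=\; e \cdot \bigl(\tau_1 \oplus \tau_2 \oplus \cdots \oplus \tau_t\bigr),
\]
where $\tau_1,\ldots,\tau_t$ are the distinct $\GL_n(\cO_\ell)$-conjugates of some irreducible representation $\tau$ of $\SL_n(\cO_\ell)$, each occurring with the same multiplicity $e\geq 1$. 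Proving multiplicity-freeness of the restriction is therefore equivalent to proving $e=1$.

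Second, I would extract the Whittaker information for $\rho$. By Corollary~\ref{cor:hill-regular-iff-whittaker}, since $\rho$ is regular there exists a non-degenerate character $\theta$ of $\mathbf{U}(\cO_\ell)$ such that $\Hom_{\GL_n(\cO_\ell)}\bigl(\rho,\Ind_{\mathbf{U}(\cO_\ell)}^{\GL_n(\cO_\ell)}(\theta)\bigr)$ is nonzero, and by Theorem~\ref{thm:main-theorem} the induced representation on the right is multiplicity free, so this Hom-space has dimension exactly $1$. Frobenius reciprocity then yields
\[
\dim \Hom_{\mathbf{U}(\cO_\ell)}(\rho,\theta) \;=\; 1.
\]

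Third, I would restrict the Clifford decomposition further from $\SL_n(\cO_\ell)$ down to its subgroup $\mathbf{U}(\cO_\ell)$ and compare with the identity above. Since $\rho\big|_{\mathbf{U}(\cO_\ell)} = e \cdot \bigoplus_{i=1}^t \tau_i\big|_{\mathbf{U}(\cO_\ell)}$, we obtain
\[
1 \;=\; \dim \Hom_{\mathbf{U}(\cO_\ell)}(\rho,\theta) \;=\; e \cdot \sum_{i=1}^{t} \dim \Hom_{\mathbf{U}(\cO_\ell)}(\tau_i,\theta),
\]
and since both factors on the right are non-negative integers whose product is $1$, we conclude $e=1$. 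Hence $\rho\big|_{\SL_n(\cO_\ell)}$ is multiplicity free.

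The argument is quite short and I do not foresee a genuine obstacle; the main points to verify carefully are the equal-multiplicity statement in Clifford's theorem (which holds because $\GL_n(\cO_\ell)/\SL_n(\cO_\ell)$ is finite) and the trivial inclusion $\mathbf{U}(\cO_\ell) \subseteq \SL_n(\cO_\ell)$, which makes the restriction to $\mathbf{U}(\cO_\ell)$ factor through $\SL_n(\cO_\ell)$. Note that this plan invokes Theorem~\ref{thm:main-theorem} and Corollary~\ref{cor:hill-regular-iff-whittaker} only in the $\GL_n$ case; the $\SL_n$ half of Theorem~\ref{main-theorem-2} is not required, which is consistent with the fact that a regular representation of $\GL_n(\cO_\ell)$ may restrict to irreducible constituents of $\SL_n(\cO_\ell)$ that are not themselves regular.
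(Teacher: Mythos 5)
Your argument is correct and rests on the same pillars as the paper's own proof: Clifford theory for $\SL_n(\cO_\ell)\triangleleft\GL_n(\cO_\ell)$, the fact that a regular representation of $\GL_n(\cO_\ell)$ occurs exactly once in $\Ind_{\mathbf U(\cO_\ell)}^{\GL_n(\cO_\ell)}(\theta)$ (Corollary~\ref{cor:hill-regular-iff-whittaker} together with Theorem~\ref{thm:main-theorem}), and Frobenius reciprocity. The only difference is one of bookkeeping: the paper bounds $\langle\delta,\rho\rangle_{\SL_n(\cO_\ell)}$ by realizing $\Ind_{\SL_n(\cO_\ell)}^{\GL_n(\cO_\ell)}(\delta)$ inside the multiplicity-free representation $\Ind_{\mathbf U(\cO_\ell)}^{\GL_n(\cO_\ell)}(\theta)$, whereas you factor the identity $\dim\Hom_{\mathbf U(\cO_\ell)}(\rho,\theta)=1$ through the equal-multiplicity Clifford decomposition -- the two computations are equivalent.
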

A proof of this result is included in Section~\ref{sec:branching-rules}. This result is quite interesting in view of the fact that although the construction of regular representations of $\GL_n(\cO_\ell)$ is known in general but very less is known regarding the irreducible representations of $\SL_n(\cO_\ell)$ for $p \mid n$. The above result may be useful in finding more information regarding the irreducible representations of $\SL_n(\cO_\ell)$ for $p \mid n$. 

For $(p,2) = (p,n) =1$, Theorem~\ref{thm:multiplicity-free-induction} along with the information of dimensions of regular representations of $\GL_n(\cO_\ell)$ and $\SL_n(\cO_\ell)$ as given in \cite{MR3737836}, we determine the explicit dimension of the endomorphism algebra $\mathrm{End}_{\SL_n(\cO_\ell)} \left( \mathrm{Res}_{\SL_n(\cO_\ell)}^{\GL_n(\cO_\ell)} (\rho)  \right)$ for any regular representation $\rho$ of $\GL_n(\cO_\ell)$, see Theorem~\ref{thm:branching-numbers}. We believe that the results of Theorems~\ref{thm:multiplicity-free-induction} and ~\ref{thm:branching-numbers} will prove to be helpful in describing the branching rules for the supercuspidal representations of $\SL_n(\mathbf F)$ as was achieved by  Nevins~\cite{MR3008903} for $\SL_2(\mathbf F)$.

\subsection{Notations} We include here few notations that we use throughout this article. First, it is to be noted that $\kk = \cO_1 = \mathbb F_q,$ we use these interchangeably depending on the context and the convenience. For a group $G,$ the set of all equivalence classes of irreducible representations of $G$ is denoted by $\mathrm{Irr}(G).$ The set of all one dimensional representation of an abelian group $A$ is also denoted by $\widehat{A}.$ 
For a normal subgroup $N$ of $G$ and an irreducible representation $\varphi$ of $N,$ the set of all $\rho \in \mathrm{Irr}(G) $ such that the restriction $\rho|_N$ of $\rho$ to $N$ has $\varphi$ as a non-trivial constituent is denoted by $\mathrm{Irr}(G \mid \varphi)$ and any element of $\mathrm{Irr}(G \mid \varphi)$ is at times called a representation of $G$ lying above $\varphi$.

\section{Regular elements of $\g(\cO_r)$ }
\label{sec:regular-elements}
Throughout this section, we assume that either $\G = \SL_n$ with $(p,n) = (p,2) = 1$ or $\G = \GL_n$. Let $\g$ denote the Lie algebra scheme of $\G.$ In this section, we collect some facts regarding the regular elements of $\g(\cO_r)$ for $r \geq 1$.

An element $x \in \g(\cO_r) \subseteq M_n(\cO_r)$ is called {\it  cyclic} if there exists an element $v \in \cO_r^{\oplus n}$ such that the set $\{v, x(v) , x^2(v), \ldots, x^{n-1} (v) \}$ is a basis of free $\cO_r$-module $\cO_r^{\oplus n}.$ The vector $v$ is called a cyclic vector of $x.$ 
The following lemma relates the cyclic and regular elements of $M_n(\mathbb F_q)$. 
\begin{lemma} 
\label{lem:regular-minimal-characteristic} 
An element $x \in M_n(\cO_1) = M_n(\mathbb F_q)$ is regular if and only $x$ is cyclic. 

\end{lemma}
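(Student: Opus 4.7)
The plan is to translate both the ``regular'' and ``cyclic'' conditions into statements about the $\mathbb F_q[T]$-module structure on $V = \mathbb F_q^n$ where $T$ acts via $x$, and observe that both are equivalent to $V$ being a cyclic $\mathbb F_q[T]$-module. This is the classical rational canonical form story; the only thing to do is identify the two notions carefully.

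First I would apply the structure theorem for finitely generated modules over the PID $\mathbb F_q[T]$ to write
\[
V \;\cong\; \mathbb F_q[T]/(d_1) \oplus \mathbb F_q[T]/(d_2) \oplus \cdots \oplus \mathbb F_q[T]/(d_k),
\]
where $d_1 \mid d_2 \mid \cdots \mid d_k$ are the (non-constant) invariant factors of $x$. Standard facts about rational canonical form then give that the minimal polynomial of $x$ equals $d_k$, while the characteristic polynomial equals $\prod_{i=1}^{k} d_i$. Comparing degrees, these two polynomials coincide if and only if $k=1$, i.e.\ $V$ is a cyclic $\mathbb F_q[T]$-module. Thus $x$ is regular in the sense of this paper precisely when $V$ is cyclic over $\mathbb F_q[T]$.

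Next I would unravel the matrix-theoretic notion of cyclic introduced above. By definition, $x$ is cyclic (in the matrix sense) if there exists $v \in V$ such that $v, xv, \ldots, x^{n-1}v$ form an $\mathbb F_q$-basis of $V$. Translating the action of $x$ into multiplication by $T$, this says exactly that $V = \mathbb F_q[T]\cdot v$ as an $\mathbb F_q[T]$-module, so $V$ is generated by a single element and is in particular a cyclic module. Conversely, if $V = \mathbb F_q[T]/(f)$ for a single monic polynomial $f$ of degree $n$, then the image $v$ of $1$ is such that $v, Tv, \ldots, T^{n-1}v$ is an $\mathbb F_q$-basis, yielding a cyclic vector for $x$. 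Hence $x$ is cyclic (in the matrix sense) iff $V$ is cyclic as an $\mathbb F_q[T]$-module.

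Combining the two equivalences finishes the lemma. There is no real obstacle here: the entire content is the structure theorem over $\mathbb F_q[T]$ together with the identification of invariant factors with minimal/characteristic polynomials, both of which are standard. The only minor care needed is to keep the two distinct uses of the word ``cyclic'' (module-theoretic versus the existence of a cyclic vector for $x$) aligned, which the translation $T \leftrightarrow x$ handles transparently.
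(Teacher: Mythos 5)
Your proof is correct: it is the standard invariant-factor argument showing that ``characteristic polynomial equals minimal polynomial'' and ``existence of a cyclic vector'' are both equivalent to $V=\mathbb F_q^n$ being a cyclic $\mathbb F_q[T]$-module. The paper does not write this argument out but simply cites Suprunenko--Tyshkevich for it, so your proposal supplies exactly the details the paper delegates to that reference.
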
 
 \begin{proof}
This follows from the definition of regular $x \in M_n(\cO_1)$ (a proof appears in for example, Suprunenko-Tyshkevich~\cite[Theorem~5]{MR0201472}). 
\end{proof}
Next, we recall the definition of regular elements of $\g(\cO_r)$. 
\begin{definition} [Regular element of $\g(\cO_r)$] 
An element $x \in \g(\cO_r) \subseteq M_n(\cO_r)$ is called regular if and only if it is cyclic. 
\end{definition}

For any $i \leq r,$ there exists a natural projection $\rho_{r, i}: \cO_{r} \rightarrow \cO_i$ which is a ring homomorphism. By applying entry wise, we obtain a projection $\rho_{r,i}: \g(\cO_{r})  \rightarrow \g(\cO_i).$ For any $x \in \g(\cO_{r}),$ the image $\rho_{r,1}(x)$ is denoted by $\bar{x}.$ 
 
\begin{lemma}
\label{lem:hill-results-regular} Let $x \in \g(\cO_r) \subseteq M_n(\cO_r)$ and let $r \geq 1.$ The following are equivalent. 
\begin{enumerate}
\item The element $x \in \g(\cO_r)$ is regular. 
\item The projection of $x$ to $\g(\cO_i)$ for every $1 \leq i \leq r$ is regular.
\item The centralizer $C_{M_n(\cO_r)}(x) = \{ y \in M_n(\cO_r) \mid xy = yx \}$ of $x$ in $M_n(\cO_r)$ is abelian.
\item $C_{M_n(\cO_r)}(x) =  \cO_r[x],$ where $\cO_r[x]$ is a $\cO_r$-subalgebra of $M_n(\cO_r)$ generated by the set $\{ I, x, x^2, \ldots, x^{n-1} \}.$ 
\end{enumerate}

\end{lemma}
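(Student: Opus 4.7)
My plan is to establish the equivalences via the cycle $(1) \Leftrightarrow (2)$, $(1) \Rightarrow (4) \Rightarrow (3)$, and finally $(3) \Rightarrow (1)$. The first three implications are quite direct, while $(3) \Rightarrow (1)$ is the substantive step.

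For $(1) \Rightarrow (2)$, I would simply observe that if $\{v, xv, \ldots, x^{n-1}v\}$ is a basis of the free $\cO_r$-module $\cO_r^{\oplus n}$, then its image under $\rho_{r,i}$ is a basis of the free $\cO_i$-module $\cO_i^{\oplus n}$ exhibiting $\rho_{r,i}(x)$ as cyclic. For $(2) \Rightarrow (1)$, it suffices to lift cyclicity from $\bar{x}$ to $x$. Given a cyclic vector $\bar{v}$ of $\bar{x}$ (which exists by Lemma~\ref{lem:regular-minimal-characteristic}), I would pick an arbitrary lift $v \in \cO_r^{\oplus n}$. The reductions of $v, xv, \ldots, x^{n-1}v$ form a $\kk$-basis of $\kk^{\oplus n}$, so by Nakayama's lemma these $n$ elements generate the free $\cO_r$-module $\cO_r^{\oplus n}$; since they are the right number of generators for a rank-$n$ free module over the local ring $\cO_r$, they form a basis.

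For $(1) \Rightarrow (4)$, the inclusion $\cO_r[x] \subseteq C_{M_n(\cO_r)}(x)$ is automatic. Conversely, given a cyclic vector $v$ and any $y \in C_{M_n(\cO_r)}(x)$, I would write $yv = \sum_{i=0}^{n-1} c_i x^i v$ for unique $c_i \in \cO_r$; the fact that $y$ commutes with $x$ forces $y$ and $\sum c_i x^i$ to agree on every $x^j v$, hence on the entire free module, so $y \in \cO_r[x]$. The implication $(4) \Rightarrow (3)$ is immediate since $\cO_r[x]$ is commutative.

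The main obstacle is $(3) \Rightarrow (1)$, which I would prove by contrapositive: assuming $x$ is not cyclic, I must exhibit two non-commuting elements of $C_{M_n(\cO_r)}(x)$. By $(1) \Leftrightarrow (2)$, $\bar{x}$ is not cyclic in $M_n(\kk)$, so the $\kk[\bar{x}]$-module $\kk^{\oplus n}$ has at least two invariant factors. My strategy is to first conjugate $x$ by a lift $g \in \GL_n(\cO_r)$ of a suitable $\bar{g} \in \GL_n(\kk)$ to arrange that $\bar{x}$ is in rational canonical form. Then using Hensel's lemma on the coprime prime-power factorization of the characteristic polynomial of $\bar{x}$, I would decompose $\cO_r^{\oplus n}$ into $x$-invariant summands and reduce to the case where the minimal polynomial of $\bar{x}$ is a power of a single irreducible. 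In that primary case, the companion-block structure of $\bar{x}$ produces explicit non-commuting elements of $C_{M_n(\kk)}(\bar{x})$, for instance a projector onto one block and a ``shift'' between two blocks with the smaller invariant factor. The delicate point, and the hardest part of the proof, is that the reduction map $C_{M_n(\cO_r)}(x) \to C_{M_n(\kk)}(\bar{x})$ is not surjective in general, so I cannot blindly lift arbitrary commuting elements. I would produce honest lifts of the specific constructed elements by exploiting the block structure and solving the lifting equations $[x, \cdot] = 0$ one $\wp$-power at a time; once two such lifts $a, b \in C_{M_n(\cO_r)}(x)$ are in hand, $[a,b]$ reduces to a nonzero matrix modulo $\wp$ and is therefore nonzero, contradicting the hypothesis that the centralizer is abelian.
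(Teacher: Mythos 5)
First, a point of comparison: the paper does not actually prove this lemma — it defers entirely to Hill~\cite[Theorem~3.6, Corollary~3.7]{MR1334228} — so you are attempting strictly more than the authors do. Your arguments for $(1)\Leftrightarrow(2)$ (lift a cyclic vector of $\bar{x}$, apply Nakayama, and use that $n$ generators of a free rank-$n$ module over the local ring $\cO_r$ form a basis) and for $(1)\Rightarrow(4)\Rightarrow(3)$ (evaluate a centralizing element at the cyclic vector) are correct and standard.

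The implication $(3)\Rightarrow(1)$, however, contains a genuine gap, located exactly at the step you yourself flag as delicate. Your detection mechanism is to produce $a,b\in C_{M_n(\cO_r)}(x)$ whose commutator is nonzero because its reduction modulo $\wp$ is nonzero. This cannot work in general: the image of the reduction map $C_{M_n(\cO_r)}(x)\to C_{M_n(\kk)}(\bar{x})$ can be \emph{abelian} even when $C_{M_n(\cO_r)}(x)$ is not. Concretely, take $n=2$, $r=2$ and $x=\left(\begin{smallmatrix}0&\varpi\\ 0&0\end{smallmatrix}\right)$, so $\bar{x}=0$ is non-cyclic. A matrix $y=\left(\begin{smallmatrix}a&b\\ c&d\end{smallmatrix}\right)$ centralizes $x$ if and only if $c\in\varpi\cO_2$ and $d-a\in\varpi\cO_2$; hence the image of $C_{M_2(\cO_2)}(x)$ in $M_2(\kk)$ is $\{\bar{a}I+\bar{b}E_{12}\}$ (with $E_{ij}$ the elementary matrices), which is commutative. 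In particular your proposed witnesses do not lift at all — any lift of the projector $E_{11}$ has $d-a\equiv -1\pmod{\wp}$ — and for the non-commuting pairs that do exist, such as $E_{12}$ and $\varpi E_{21}$, the commutator $\varpi(E_{11}-E_{22})$ is nonzero but vanishes modulo $\wp$. So the failure of commutativity is in general visible only at higher $\wp$-adic order, and no amount of care in ``solving the lifting equations one $\wp$-power at a time'' will produce two centralizing elements whose reductions fail to commute. Repairing $(3)\Rightarrow(1)$ therefore requires a genuinely different mechanism — for instance the module-theoretic/counting analysis of the centralizer over $\cO_r$ carried out in Hill's Theorem~3.6, which is precisely what the paper cites.
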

\begin{proof}
A proof of this follows from Hill~\cite[Theorem~3.6, Corollary~3.7]{MR1334228} along with the fact that $x \in \g(\cO_r)$ is regular if and only if $x$ is regular when viewed as an element of $M_n(\cO_r)$. 
\end{proof}

Recall $\cO_1 = \mathbb F_q.$ We use $d_{\g}$ to denote the dimension of the $\mathbb F_q$-vector space $\g(\cO_1).$ For $\bar{x} \in \mathrm{g}(\cO_1),$ the dimension of the centralizer algebra $C_{\g(\cO_1)}(\bar{x}) = \{ \bar{y} \in \g(\cO_1) \mid \bar{x}\bar{y} = \bar{y}\bar{x}  \}$ as $\cO_1$-vector space is denoted by $d_{\g(\cO_1)}(\bar{x}).$ Note that by Lemmas~\ref{lem:hill-results-regular} and \ref{lem:regular-minimal-characteristic}, for every regular $\bar{x} \in M_n(\cO_1)$ and $\bar{y} \in \sl_n(\cO_1)$ we have $d_{M_n(\cO_1)} (\bar{x}) = n$ and $d_{\sl_n(\cO_1)} (\bar{y}) = n-1.$ 
\begin{lemma}
Let $x \in \g(\cO_r)$ be a regular matrix and $C_{\g(\cO_r)}(x) = \{ y \in \g(\cO_r) \mid xy = yx  \}$ be the centralizer of $x$ in $\g(\cO_r).$ Then $|C_{\g(\cO_r)}(x)| = q^{ (d_{\g(\cO_1)}(\bar{x}) ) r}.$
\end{lemma}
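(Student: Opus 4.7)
The plan is to reduce to counting the commutative subalgebra $\cO_r[x]$ using Lemma~\ref{lem:hill-results-regular}(4), which already identifies $C_{M_n(\cO_r)}(x)$ with $\cO_r[x]$. The key intermediate step I would prove is that $\{I, x, x^2, \ldots, x^{n-1}\}$ is a free $\cO_r$-basis of $\cO_r[x]$, so that $|\cO_r[x]| = q^{rn}$. Once this is established, the statement for $\G = \GL_n$ is immediate, and the statement for $\G = \SL_n$ follows by cutting out the trace-zero hyperplane.

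To establish freeness without working directly with minimal polynomials over the non-field ring $\cO_r$, I would use a cyclic vector lifted from the residue field. By Lemma~\ref{lem:hill-results-regular}(2) the reduction $\bar x$ is regular in $M_n(\mathbb F_q)$, hence cyclic by Lemma~\ref{lem:regular-minimal-characteristic}. Pick a cyclic vector $\bar v \in \mathbb F_q^n$ for $\bar x$ and lift it to $v \in \cO_r^n$. Then $v, xv, \ldots, x^{n-1}v$ reduce mod $\wp$ to an $\mathbb F_q$-basis of $\mathbb F_q^n$; since the determinant of the change-of-basis matrix is a unit in $\cO_r$, these vectors form an $\cO_r$-basis of $\cO_r^n$. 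Any linear relation $\sum a_i x^i = 0$ in $M_n(\cO_r)$ would then give $\sum a_i(x^i v) = 0$ in $\cO_r^n$, forcing all $a_i = 0$; this yields $\cO_r$-linear independence of $\{I, x, \ldots, x^{n-1}\}$. The Cayley--Hamilton theorem applied in $M_n(\cO_r)$ expresses $x^n$ (and hence every higher power) as an $\cO_r$-combination of lower ones, so these same $n$ elements also span $\cO_r[x]$.

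For $\G = \GL_n$, we have $\g(\cO_r) = M_n(\cO_r)$ and $d_{\g(\cO_1)}(\bar x) = n$ by the regularity of $\bar x$, so $|C_{\g(\cO_r)}(x)| = |\cO_r[x]| = q^{rn}$ as desired. For $\G = \SL_n$ with $(p,n) = 1$, observe $C_{\g(\cO_r)}(x) = \cO_r[x] \cap \sl_n(\cO_r)$ is the kernel of the $\cO_r$-linear map $\tr : \cO_r[x] \to \cO_r$. Since $\tr(I) = n$ is a unit in $\cO_r$ by the hypothesis $(p,n) = 1$, the trace map is surjective, so its kernel has size $q^{rn}/q^r = q^{r(n-1)} = q^{r \cdot d_{\sl_n(\cO_1)}(\bar x)}$. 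The only delicate point in the argument is the freeness of $\cO_r[x]$ over $\cO_r$; the cyclic-vector lifting trick is precisely what converts mod-$\wp$ linear independence into $\cO_r$-linear independence and circumvents the fact that $\cO_r$ is not a field.
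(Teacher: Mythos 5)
Your proof is correct and follows essentially the same route as the paper, which simply invokes Lemma~\ref{lem:hill-results-regular} (in particular the identification $C_{M_n(\cO_r)}(x)=\cO_r[x]$) and leaves the counting implicit. Your cyclic-vector lifting argument for the freeness of $\cO_r[x]$ and the trace-kernel computation for $\sl_n$ (using $(p,n)=1$) are exactly the details the paper's one-line proof suppresses.
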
 
\begin{proof}  Both for $\g = M_{n}(\cO_{r})$ and $\g = \mathrm{sl}_n(\cO_r)$ and regular $x$, the result follows by Lemma~\ref{lem:hill-results-regular}.

\end{proof}

\begin{definition}($a$-regular element) For $a \in \cO_r^\times$, let $x \in M_n(\cO_r)$ be of the following form, 
	\begin{equation} \label{regular element}
	x = \left( 
	\begin{matrix}
	0 & 0 & 0 & \cdots & 0 & x_{1} \\
	a & 0 & 0 & \cdots & 0 & x_{2} \\
	0 & 1 & 0 & \cdots & 0 & x_{3} \\
	\vdots & \vdots & \vdots &  & \vdots & \vdots \\
	0 & 0 & 0 & \cdots & 1 & x_{n}
	\end{matrix}
	\right)
	\end{equation}
	where $x_{i} \in \cO_r$ for $1 \leq i \leq n.$ Then $x \in M_n(\cO_r)$ is regular and is called an $a$-regular element of $M_n(\cO_r)$.  
\end{definition}

\begin{lemma}
\label{lem:about-regular-elements} The following are true for the set of regular elements of $\g(\cO_r).$
\begin{enumerate}
\item The set of regular elements in $\g(\cO_r)$ is invariant under the conjugation action of $\G(\cO_r).$ 
\item Every conjugacy class of regular elements of  $\g(\cO_r)$ contains an $a$-regular element for some $a \in \cO_r^\times$.
\item For any $a \in \cO_r^\times$, the number of $a$-regular conjugacy classes of $\g(\cO_r)$ is  $q^{ (d_{\g(\cO_1)}(\bar{x}))  r},$ where $x \in \g(\cO_r)$ is any regular matrix. 
\end{enumerate} 

\end{lemma}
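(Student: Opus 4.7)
The plan is to treat the three parts in order, exploiting throughout the cyclic vector characterization of regularity from Lemma~\ref{lem:regular-minimal-characteristic} and Lemma~\ref{lem:hill-results-regular}.

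For part (1), I would observe that if $v$ is a cyclic vector for $x$, then $gv$ is a cyclic vector for $gxg^{-1}$, because $(gxg^{-1})^{i}(gv) = g(x^{i}v)$ and $g \in \G(\cO_r)$ carries a basis of $\cO_r^{\oplus n}$ to a basis; hence the regular locus is stable under conjugation. For part (2), the idea is to build an explicit conjugator from the cyclic vector. Given a regular $x \in \g(\cO_r)$ with cyclic vector $v$, let $g \in \GL_n(\cO_r)$ be the matrix whose columns are $v, xv, \ldots, x^{n-1}v$. A direct computation shows that $g^{-1} x g$ is the companion matrix
\begin{equation*}
g^{-1} x g \;=\; \begin{pmatrix} 0 & 0 & \cdots & 0 & c_{0} \\ 1 & 0 & \cdots & 0 & c_{1} \\ 0 & 1 & \cdots & 0 & c_{2} \\ \vdots &  & \ddots &  & \vdots \\ 0 & 0 & \cdots & 1 & c_{n-1} \end{pmatrix},
\end{equation*}
where the $c_i$ are the coordinates of $x^n v$ in the cyclic basis. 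This is $1$-regular, which finishes the $\GL_n$ case. For $\G = \SL_n$, I would further conjugate by $D = \diag(\det(g)^{-1}, 1, \ldots, 1)$; then $gD \in \SL_n(\cO_r)$, and $D^{-1}(g^{-1} x g) D$ is an $a$-regular element with $a = \det(g)^{-1}$.

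For part (3), the crucial input is an explicit calculation of the characteristic polynomial of an $a$-regular element. Iterating the action of $x$ on $e_1$ gives $x^{i} e_1 = a\, e_{i+1}$ for $1 \leq i \leq n-1$, and hence
\begin{equation*}
x^{n} e_1 \;=\; a\, x_1 e_1 + x_2 (x e_1) + \cdots + x_n (x^{n-1} e_1),
\end{equation*}
so the characteristic polynomial (equal to the minimal polynomial, because $e_1$ is cyclic) is
$t^{n} - x_n t^{n-1} - x_{n-1} t^{n-2} - \cdots - x_2 t - a x_1$.
Since $a \in \cO_r^\times$, the map sending $(x_1, \ldots, x_n)$ to the tuple of coefficients of this polynomial is a bijection of $\cO_r^{n}$. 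Because $\G(\cO_r)$-conjugate matrices share their characteristic polynomial, distinct $a$-regular elements lie in distinct $\G(\cO_r)$-conjugacy classes. Thus the number of $a$-regular conjugacy classes equals the number of $a$-regular matrices in $\g(\cO_r)$: for $\g = M_n$ all parameters are free, giving $q^{nr}$; for $\g = \sl_n$, the trace-zero condition forces $x_n = 0$, giving $q^{(n-1)r}$. In each case this matches $q^{d_{\g(\cO_1)}(\bar{x}) \cdot r}$ by the values of the centralizer dimension recorded just before the definition of $a$-regular elements.

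The only places requiring real care are the $\SL_n$ correction in part (2), where one must adjust the determinant without destroying the $a$-regular shape (solved by the diagonal $D$), and the bijectivity in part (3), which depends essentially on $a$ being a unit. No genuine obstacle is expected; the argument is a careful bookkeeping built around the single identity $x^i e_1 = a\, e_{i+1}$.
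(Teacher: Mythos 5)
Your proof is correct and follows essentially the same route as the paper: conjugation to companion form via the cyclic basis, the diagonal determinant correction $\diag(\det(g)^{-1},1,\ldots,1)$ for the $\SL_n$ case, and counting $a$-regular classes through characteristic polynomials (using that $a\in\cO_r^\times$ makes the parameter-to-coefficient map bijective, with $x_n=0$ cutting the count to $q^{(n-1)r}$ for $\sl_n$). The only difference is that you write out explicitly the reduction to companion form and the polynomial count, which the paper delegates to McDonald and to ``direct computations.''
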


\begin{proof} The proof of $(1)$ follows by Lemma~\ref{lem:hill-results-regular}. For $\GL_n(\cO_r)$ every $a$-regular matrix is conjugate to a $1$-regular element via conjugating with a diagonal matrix $(a, 1, \ldots, 1)$. The rest of the proof of (2) for $\G = \GL_n$ and $\g = M_n$ follows from Mcdonald~\cite[p.417-419]{MR769104} (see also Hill~\cite[Remark~3.10]{MR1334228} ). For $\G = \SL_n$, $\g = \sl_n$ we first note that by above for $x_1 \in \sl_n(\cO_r),$ there exists $g \in \GL_n(\cO_r)$ such that $gx_1 g^{-1}$ is $1$-regular. Consider the diagonal matrix $z = (\frac{1}{\det(g)} , 1, \ldots, 1)\in \GL_n(\cO_r)$ and $g' = zg \in \SL_n(\cO_r)$ then $g' x_1 (g')^{-1} $ is $\det(g)$-regular. This in particular implies that every similarity class in $\sl_n(\cO_r)$ contains an $a$-regular element for some $a \in \cO_r^\times$. From this (2) and (3) for $\g = \sl_n$ follow easily from the corresponding results for $\g = M_n.$  
It is to be noted that a similarity class in $\g(\cO_r)$ may contain both an $a$-regular and a $b$-regular element for $a \neq b$. However for a fixed $a \in \cO_r^\times$, each similarity class contains at most one $a$-regular element. 

Last we note that the similarity class of an $a$-regular element is determined by its characteristic polynomial so $(3)$ follows by direct computations from Lemma~\ref{lem:hill-results-regular}. 
\end{proof}

\begin{lemma}
\label{lem:stab-unipotent-intersection}
Let $x \in M_n(\cO_r)$ be an $a$-regular matrix. Then $ C_{\g(\cO_r)} (x) \cap \mathbf{U}(\cO_r) = C_{\G(\cO_r)} (x) \cap \mathbf{U}(\cO_r) = \{ I_n \}$ 
\end{lemma}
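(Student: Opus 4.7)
The plan is to reduce the lemma to a statement inside $M_n(\cO_r)$ and then exploit the explicit shape of an $a$-regular matrix. Since $C_{\g(\cO_r)}(x) \subseteq C_{M_n(\cO_r)}(x)$ and $C_{\G(\cO_r)}(x) \subseteq C_{M_n(\cO_r)}(x)$, it suffices to show that $C_{M_n(\cO_r)}(x) \cap \mathbf{U}(\cO_r) = \{I_n\}$. The main input will be Lemma~\ref{lem:hill-results-regular}(4): since $x$ is regular, its centralizer in $M_n(\cO_r)$ coincides with the commutative $\cO_r$-algebra $\cO_r[x]$, so any centralizer element takes the form $y = \sum_{i=0}^{n-1} c_i x^i$ with $c_i \in \cO_r$.

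Next I would track the action of $x$ on the first standard basis vector $e_1 \in \cO_r^{\oplus n}$. Reading off the columns of \eqref{regular element} gives $x e_1 = a e_2$ and $x e_j = e_{j+1}$ for $2 \leq j \leq n-1$, so a one-step induction yields $x^i e_1 = a\, e_{i+1}$ for every $1 \leq i \leq n-1$. Hence for any $y = \sum_{i=0}^{n-1} c_i x^i \in C_{M_n(\cO_r)}(x) \cap \mathbf{U}(\cO_r)$ one has $y e_1 = c_0 e_1 + a \sum_{i=1}^{n-1} c_i e_{i+1}$. Because $y$ is upper triangular unipotent, its first column must be $e_1$; comparing coefficients in the free basis $\{e_1, \ldots, e_n\}$ of $\cO_r^{\oplus n}$ and using that $a \in \cO_r^\times$ then forces $c_0 = 1$ and $c_i = 0$ for $1 \leq i \leq n-1$, so $y = I_n$.

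I do not anticipate any genuine obstacle: the argument rests on the observation that $e_1$ is a particularly simple cyclic vector for an $a$-regular $x$, its orbit under $x$ being proportional to the standard basis. The one point worth flagging is that Lemma~\ref{lem:hill-results-regular}(4) is needed over the full ring $\cO_r$, not only over its residue field; this is what allows one to write centralizer elements as $\cO_r$-polynomials in $x$ and then solve for the coefficients $c_i$ coordinate by coordinate, with the invertibility of $a$ clinching the vanishing of $c_1, \ldots, c_{n-1}$.
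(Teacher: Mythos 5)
Your proof is correct and follows essentially the same route as the paper: both reduce to $C_{M_n(\cO_r)}(x)=\cO_r[x]$ via Lemma~\ref{lem:hill-results-regular}(4) and then use the unit entries of $x^i$ below the diagonal to rule out nontrivial upper-triangular unipotent elements. Your version is if anything slightly sharper, since tracking the first column ($x^i e_1 = a\,e_{i+1}$) makes explicit why the contributions of distinct powers cannot cancel, a point the paper's proof leaves implicit.
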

\begin{proof}
 We note that for $a$-regular matrix $x,$ for every $1 \leq i \leq n-1,$ there exists $1 \leq j, k \leq n$ such that $j > k$ and $(j,k)^{th}$-entry of $x^i \in \cO_r^\times$. Now result follows by the fact that $ C_{\G(\cO_r)} (x) \subseteq C_{\g(\cO_r)} (x) \subseteq  C_{M_n(\cO_r)} (x)  ,$ by Lemma~\ref{lem:hill-results-regular} the set $C_{\g(\cO_r)} (x)$ is  generated by $\{ I, x, x^2, \ldots, x^{n-1}  \}$ as $\cO_r$-algebra and for every $y \in \mathbf{U}(\cO_r),$ the $(j,k)^{th}$ entry of $y$ is zero for $j > k.$ 
\end{proof}
\begin{lemma}
\label{lem:uppertriangular-regular}
Let $x = (x_{ij}) \in M_n(\cO_r)$ be such that $x_{21} = a \in \cO_r^\times $ and $x_{ij} = 1$ for all $i, j$ such that $i \geq 3$, $i = j + 1$ and $x_{ij} = 0$ for all $i, j$ such that $i-j \geq 2.$ Then $x$ is regular. 
\end{lemma}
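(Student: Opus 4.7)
The plan is to exhibit $e_1 = (1, 0, \ldots, 0)^T$ as a cyclic vector for $x$, that is, to show that $\{e_1, xe_1, x^2e_1, \ldots, x^{n-1}e_1\}$ is an $\cO_r$-basis of $\cO_r^{\oplus n}$. Since the definition of regularity for elements of $\g(\cO_r)$ is exactly cyclicity, this will complete the proof.

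The key observation driving the induction is that the shape of $x$ (upper Hessenberg, with units on the sub-diagonal) forces each power $x^k e_1$ to reach exactly one new coordinate direction. More precisely, I would prove by induction on $k$ (for $0 \leq k \leq n-1$) the following claim: $x^k e_1 \in \mathrm{span}_{\cO_r}(e_1, e_2, \ldots, e_{k+1})$, and the coefficient of $e_{k+1}$ in $x^k e_1$ is the unit $a$ (with the convention that this coefficient is $1$ for $k = 0$). The base case $k = 0$ is trivial; for $k = 1$, $xe_1$ is the first column of $x$, whose only nonzero entries are $x_{11}$ and $x_{21} = a$, so $xe_1 = x_{11} e_1 + a e_2$ as required.

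For the inductive step, suppose $x^{k-1}e_1 = \sum_{j=1}^{k} c_j e_j$ with $c_k = a$. Then $x^k e_1 = \sum_{j=1}^{k} c_j\, x e_j$. By the hypotheses on $x$, the $j$-th column $xe_j$ has zero entries below row $j+1$, so $xe_j \in \mathrm{span}(e_1, \ldots, e_{j+1})$ for each $j \leq k$. Consequently $x^k e_1 \in \mathrm{span}(e_1, \ldots, e_{k+1})$. The coefficient of $e_{k+1}$ in $x^k e_1$ comes only from the term $c_k\, xe_k$, and equals $c_k \cdot x_{k+1,k}$. For $k \geq 2$ we have $x_{k+1,k} = 1$, while the earlier computation already handled $k = 1$; in all cases this coefficient equals $a$, a unit, completing the induction.

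It follows that the transition matrix from $(e_1, xe_1, \ldots, x^{n-1}e_1)$ to the standard basis is upper triangular with diagonal entries $1, a, a, \ldots, a \in \cO_r^\times$, hence invertible in $M_n(\cO_r)$. Therefore $\{e_1, xe_1, \ldots, x^{n-1}e_1\}$ is an $\cO_r$-basis of $\cO_r^{\oplus n}$, so $x$ is cyclic and hence regular. There is no real obstacle here; the only mild subtlety is bookkeeping at $k = 1$, where the sub-diagonal entry is $a$ rather than $1$, but this only affects the leading constant and does not disturb the induction.
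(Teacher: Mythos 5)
Your proof is correct and follows essentially the same route as the paper: both exhibit $e_1=(1,0,\ldots,0)$ as a cyclic vector, using the Hessenberg shape of $x$ to see that each power $x^k e_1$ introduces the new coordinate $e_{k+1}$ with unit coefficient. The only cosmetic difference is that the paper first reduces to the residue field $\cO_1$ via Lemma~\ref{lem:hill-results-regular} and leaves the verification as ``easy to see,'' whereas you carry out the (equally valid, and fully written-out) triangularity argument directly over $\cO_r$.
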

\begin{proof} 
By Lemma~\ref{lem:hill-results-regular}, it is enough to prove that $\bar{x}$ is regular for all above $x.$ For that it is easy to see that $v = (1, 0, 0, \ldots, 0) \in \cO_1^{\oplus n}$ satisfies that $\{ v, \bar{x}v, \bar{x}^2v, \cdots, \bar{x}^{n-1}v \}$ generates the n-dimensional vectors space $\cO_1^{\oplus n}.$

\end{proof}

\section{Construction of regular representations of $\G(\cO_\ell)$}
\label{sec:regular-representation-construction}
In this section we follow \cite{MR3737836, MR3743488} to give a brief outline of the construction of regular representations of $\G(\cO_\ell)$, where either $\G = \SL_n$ with $(p,n) = (p,2) = 1$ or $\G = \GL_n$.

For $ i \leq \ell$ and the natural projection maps  $\rho_{\ell,i}: \G(\cO_{\ell})  \rightarrow \G(\cO_i),$ as defined in Section~\ref{sec:regular-elements}, let $\K_{\ell}^{i} =  \ker (\rho_{\ell,i}) $ be the $i$-th congruence subgroups of $\G(\cO_\ell).$ It is easy to note that for $i \geq \ell/2,$  the group  $\K^i_\ell $ is isomorphic to the abelian additive subgroup $\g(\cO_{\ell-i})$ of $M_n(\cO_{\ell-i}).$ Let $\varphi: \cO_\ell  \rightarrow \mathbb C^\times$ be a fixed primitive one dimensional representation of $\cO_\ell$. 
 For any $i \leq \ell/2$ and  $x \in \g(\cO_i),$ let $\hat{x} \in \g(\cO_\ell)$ be an arbitrary lift of $x$ satisfying $\rho_{\ell,i}(\hat{x}) = x.$ Define $\varphi_x: I + \varpi^{\ell-i} \g(\cO_\ell) \rightarrow \mathbb C^\times$ by 
 \[ 
 \varphi_x(I + \varpi^{\ell-i} y) = \varphi(\varpi^{\ell-i}\mathrm{tr}(\hat{x}y)),
 \]
for all $I + \varpi^{\ell-i} y \in \K^{\ell-i}_\ell.$  Then $\varphi_x$ is easily seen to be a well defined one dimensional representation of $\K^{\ell-i}_\ell .$ Further it is easy to see that for $i \geq \ell/2$ the following duality of abelian group $\K^{i}_\ell$ and $\g(\cO_{\ell-i})$ holds.
\begin{equation}
\label{eq:duality}
\g(\cO_{\ell-i}) \cong \widehat {\K^{i}_\ell}\,\,; x \mapsto \varphi_x\,\, \mathrm{where}, \,\, \varphi_x(y) = \varphi(\varpi^{\ell-i}\mathrm{tr}(\hat{x}y))  \,\, \forall \,\, y \in \K^{i}_\ell. 
\end{equation}

We say a one dimensional representation $\varphi_x \in \widehat{\K^{i} _\ell}$ for $i \geq \ell/2$ is regular if and only if $x \in M_n(\cO_{\ell-i})$ is a regular matrix. By Lemma~\ref{lem:hill-results-regular}, for $i \geq \ell/2$ the representation $\varphi_x \in \widehat{\K^{i} _\ell}$ is regular if and only if $\varphi_x|_{ \K^{\ell-1} _\ell}$ is regular. Recall an irreducible representation $\rho$ of $\G(\cO_\ell)$ is called regular if the orbit of its restriction to $\K_\ell^{\ell-1} $ consists of one dimensional representations $\varphi_x$ for regular $x$.

 Now we summarize the steps of construction of regular representations of $\G(\cO_\ell) $. 
For $\G(\cO_\ell) $ with odd $p$ we will outline the method as given in \cite{MR3737836} and we follow \cite{MR3743488} for $\GL_n(\cO_\ell)$ with $p=2.$ It might be useful to look at Figures~\ref{fig:ell.even} and \ref{fig:ell.odd} while going through the steps of construction. For more details on this see \cite{MR3737836, MR3743488}.  The construction in both cases is based on the tools of abstract Clifford theory and orbits. See \cite[Chapter~6]{MR0460423} for general results and their proofs regarding Clifford theory and \cite[Theorem~2.1]{Singla-Pooja} for the precise statements related to Clifford theory that are used in this article. The main point of the construction is that the case of $\G(\cO_\ell)$ where either $\G = \SL_n$ with $(p,n) = (p,2) = 1$ or $\G = \GL_n$ are good in the sense that for any regular one dimensional representation $\varphi_x$  of $\widehat{\K_\ell^{\lceil \ell/2\rceil}}$, the inertia group $I_{\G(\cO_\ell)}(\varphi_x) = \{ g \in \G(\cO_\ell) \mid \varphi_x^g \cong \varphi_x  \}  $ is "nice". In all these cases, the required inertia group although is  non-abelian but still is a product of an abelian group with a congruence subgroup and therefore all the representations of the required inertia group lying above $\varphi_x$ can be constructed. This combined with the Clifford theory gives the construction of all regular representations of $\G(\cO_\ell)$.

\subsection{Construction of regular representations of $\G(\cO_\ell)$ for $\ell = 2m$} Let $\varphi_x \in \widehat{\K_\ell^m}$ be a regular one dimensional representation of $\K_{\ell}^{m}$ for $x \in \g(\cO_m).$ Then the following are true. 

\medskip
\noindent {\bf E.1} Let   $I_{\G(\cO_\ell)}(\varphi_x) = \{ g \in \G(\cO_\ell) \mid \varphi_x^g \cong \varphi_x  \}  $ be the inertia group of $\varphi_x$ in $\G(\cO_\ell).$ 
\begin{enumerate} 
\item Then $I_{\G(\cO_\ell)}(\varphi_x)  =  \tC_{\G(\lri_\ell)}(\tilde{x})       \K_\ell^{m} ,$ where $\tilde{x} \in \cO_\ell$ is any lift of $x$ to $\g(\cO_\ell).$  

\item Although the inertia group $I_{\G(\cO_\ell)}(\varphi_x)$ is not abelian but the quotient  $I_{\G(\cO_\ell)}(\varphi_x)/\K_\ell^m \cong \tC_{\G(\lri_m)}(x)$ is abelian by Lemma~\ref{lem:hill-results-regular}.   
\end{enumerate}
\medskip
\noindent {\bf E.2} Let $\delta$ be a one dimensional representation of $\tC_{\G(\lri_\ell)}(\tilde{x})$ extending $\varphi_x|_{\K^{m}_\ell \cap \tC_{\G(\lri_\ell)}(\tilde{x})}$. The existence of this $\delta$ follows easily because the group $\tC_{\G(\lri_\ell)}(\tilde{x})  $ is abelian. Define $\widetilde{\varphi_x}: I_{\G(\cO_\ell)}(\varphi_x)  \rightarrow \mathbb {C}^\times$ by $\widetilde{\varphi_x}(ab) = \delta(a) \varphi_x(b)$ for all $a \in \tC_{\G(\lri_\ell)}(\tilde{x}) $ and $b \in  \K_\ell^{m} $. It can be easily shown that $\widetilde{\varphi_x}$ is a well defined one dimensional representation of $I_{\G(\cO_\ell)}(\varphi_x)$ that extends the representation $\varphi_x$. 

\medskip
\noindent {\bf E.3} Let $\rho \in \mathrm{Irr} \left( \G(\cO_\ell)  \mid \varphi_x     \right)$ be a regular representation of $\G(\cO_\ell),$ then there exists an extension $\widetilde{\varphi_x}$ of $\varphi_x$ to $I_{\G(\cO_\ell)}(\varphi_x)$ such that $\rho \cong \mathrm{Ind}_{I_{\G(\cO_\ell)}(\varphi_x)}^{\G(\cO_\ell)} (\widetilde{\varphi_x}) .$ This follows by ${\bf E.1}$, ${\bf E.2}$ combined with Clifford theory. 

\medskip
\noindent {\bf E.4}   We have $\left| \mathrm{Irr} (\G(\cO_\ell)  \mid \varphi_x) \right| = \left|\tC_{\G(\lri_m)}(x) \right|.$ 

\medskip
\noindent {\bf E.5}
Every $\rho \in \mathrm{Irr} \left(  \G(\cO_\ell)  \mid \varphi_x     \right)$ has dimension $\dfrac{|\G(\cO_\ell)|}{|\tC_{\G(\lri_m)}(x)||\K_\ell^m|}.$

\begin{figure}[htb!]
\centering
\begin{minipage}{0.45\textwidth}
        \centering
         
         \label{fig:ell.even}
        \begin{displaymath}
        \xymatrix{
        &  {\G(\lri_\ell)}\ar@{-}[d]  &                        \\
           &     {I_{\G(\lri_\ell)}(\varphi_x)}\ar@{-}[dr] \ar@{-}[dl]   &            \\ 
        {\K^m_\ell}\ar@{-}[dr]  &     &  \tC_{\G(\lri_\ell)}(\tilde{x})\ar@{-}[dl]      \\
        & {\K^{m}_\ell \cap \tC_{\G(\lri_\ell)}(\tilde{x})}\ar@{-}[d] &   \\
         & {\{1\}}& }
        \end{displaymath}
        \caption{Even case $\ell=2m$}
        \end{minipage}
 \begin{minipage}{.5\textwidth}
        \centering

 \label{fig:ell.odd}
\begin{displaymath}
\xymatrix{
&  {\G(\lri_\ell)}\ar@{-}[d]  &                                &                    \\
   &     {I_{\G(\lri_\ell)}(\sigma)}\ar@{-}[dd] \ar@{-}[dl]   &                              &    \\ 
{H^m_\ell}\ar@{-}[d]^{q^{\frac{d_{\g}-d_{\g(\cO_1)}(\xx) }{2}}}   &     &  &      \\
 {J_x}\ar@{-}[d]^{q^{\frac{d_{\g}-d_{\g(\cO_1)}(\xx) }{2}}}  &                                  \tC_{\G(\lri_\ell)}(\tilde{x})   \ar@{-}[dl] &    & \\
R_{\xx}  \ar@{-}[d]   &                              &     & \\
 {\K^{m+1}_\ell}\ar@{-}[dr]  &   &         &\\
& {\K^{m+1}_\ell \cap \tC_{\G(\lri_\ell)}(\tilde{x})}\ar@{-}[d] \ar@{-}[uuu] &  & \\
 & {\{1\}}& & 
}
\end{displaymath}
 \caption{Odd case $\ell=2m+1$}
\end{minipage}%
\end{figure}

\subsection{Construction of regular representations of $\G(\cO_\ell)$ for $\ell = 2m + 1$} The construction for this case is involved as compared to $\ell = 2m$ case. So here we highlight what is important for us and refer the reader to \cite{MR3737836, MR3743488} for more details. Let $\varphi_x \in \widehat{\K_\ell^{m+1}}$ be a regular one dimensional representation of $\K_{\ell}^{m+1}$ for $x \in \g(\cO_{\ell-m}).$ Let $\tilde{x} \in \g(\cO_\ell)$ be a lift of $x$ to $\g(\cO_\ell).$ For $p \neq 2,$ either $\G = \SL_n$ with $p \nmid n$ or $\G = \GL_n.$ For $p=2,$ we consider $\G = \GL_n.$

\medskip
\noindent { \bf O.1} Define groups $H^m_\ell$ and $R_{\xx}$ as follows.
\[
H^i_\ell = \begin{cases}\K_\ell^i, & p \neq 2 \\ 
 \left(\K^{1}_\ell \cap \tC_{\G(\lri_\ell)}(\tilde{x})\right) \K_\ell^{i}, & p =2.
  \end{cases}
\]

 \[
 R_{\xx} = \begin{cases} \left(\K^{m}_\ell \cap \tC_{\G(\lri_\ell)}(\tilde{x})\right) \K_\ell^{m+1}, & p \neq 2 \\ 
 \left(\K^{1}_\ell \cap \tC_{\G(\lri_\ell)}(\tilde{x})\right) \K_\ell^{m+1}, & p =2.
 
 \end{cases}
 \]
 \medskip
 \noindent { \bf O.2}
 The following are true for groups $H^i_\ell$ and $R_{\xx}.$

\begin{enumerate}
\item The group $R_{\xx}$ is a normal subgroup of $H_{\ell}^{m}.$ 
\item The quotient group $R_{\xx}/\K_\ell^{m+1} $ is abelian. 
\item The one dimensional representation $\varphi_x \in \widehat{K^{m+1}_\ell }$ extends to $R_{\xx}.$
\end{enumerate}

\medskip
\noindent { \bf O.3} The extensions of $\varphi_x$ to $R_{\xx}$ satisfy the following: 
\begin{enumerate} 
\item each extension of $\varphi_x$ to $R_{\bar{x}}$ is stable under $\K^{m}_\ell.$

\item Each extension of $\varphi_x$  to $R_{\bar{x}}$ determines a non-degenerate bilinear form on $H^{\ell}_m/R_{\bar{x}}$. Then there exists a ``nice'' maximal isotropic space $J_x$ with respect to this bilinear form such that, parallel to the method of  construction of ``Heisenberg" groups, every extension of $\varphi_x$ to $R_{\bar{x}}$ determines a unique irreducible representation of $H^m_\ell$ of dimension $q^{\frac{d_{\g} - d_{\g(\cO_1) }(\xx) }{2}},$ see \cite[Section~3]{MR3737836} for $p \neq 2$ and \cite[Section~3]{MR3743488} for $p=2$ for more details.

\item Let $\widetilde{\varphi_x}$ be an extension of $\varphi_x$ to $R_{\xx}$ and $\sigma \in \mathrm{Irr}(H_\ell^m \mid \varphi_x) $ be unique irreducible representation determined by $\widetilde{\varphi_x}.$ Then, 
\[\sigma|_{R_{\xx}} = \underbrace{\widetilde{\varphi_x} + \cdots + \widetilde{\varphi_x} }_{\frac{d_{\g}-d_{\g(\cO_1)}(\xx) }{2}-\mathrm{times}}. 
\]  
\end{enumerate} 

\medskip
\noindent { \bf O.4} Let $I_{\G(\cO_\ell)}(\sigma) = \{ g \in \G(\cO_\ell) \mid \sigma^g \cong \sigma  \}$ be the inertia groups of $\sigma \in \mathrm{Irr}(H_\ell^m \mid \varphi_x).$ Then the following are true:
\begin{enumerate} 
\item $I_{\G(\cO_\ell)}(\sigma)  = I_{\G(\cO_\ell)}(\varphi_x) = \tC_{\G(\lri_\ell)}(\tilde{x}) \K_\ell^m,$  where $\tilde{x} \in \g(\cO_\ell)$ is any lift of $x$ to $\g(\cO_\ell).$ This follows by the definition of $R_{\bar{x}}$ along with {\bf O.2}. 
\item Every $\sigma \in \mathrm{Irr}(H_\ell^m \mid \varphi_x)$ extends to the inertia group $I_{\G(\cO_\ell)}(\sigma) .$ In particular, every such extension induces irreducibly to $\G(\cO_\ell)$ and gives rise to a regular representation of $\G(\cO_\ell).$ This is slightly technical part and depends on the structure of groups $R_{\bar{x}}$, $J_x$, $H_\ell^m$ and $I_{\G(\cO_\ell)}(\sigma)$. 
\end{enumerate}

\medskip
\noindent { \bf O.5} Combining above all, there exists a bijection  $
\Irr\left( \G(\cO_\ell) \mid \sigma \right)   \leftrightarrow  \mathrm{C}_{\G(\cO_{m})}(x).$ 

\medskip
\noindent { \bf O.6} 
Every $\rho \in \mathrm{Irr} \left( \G(\cO_\ell) \mid \sigma \right)$ for  $\sigma \in \mathrm{Irr}(\K_\ell^m \mid \varphi_x)$ has dimension $q^{\frac{d_{\g}-d_{\g(\cO_1)}(\xx) }{2}}\frac{|\G(\cO_\ell)|}{|\mathrm{C}_{\G(\cO_{m})}(x)| |\K_\ell^m|} .$ 

\medskip
\noindent { \bf O.7} Altogether, we have the following:
\begin{enumerate}
\item $|\mathrm{Irr} \left( \G(\cO_\ell) \mid \varphi_x \right)|  = q^{d_{\g(\cO_1)}(\xx)} |\mathrm{C}_{\G(\cO_{m})}(x)| .$
\item Every $\rho \in \mathrm{Irr} \left( \G(\cO_\ell) \mid \varphi_x \right)$ has dimension $q^{\frac{d_{\g}-d_{\g(\cO_1)}(\xx) }{2}}\frac{|\G(\cO_\ell)|}{|\mathrm{C}_{\G(\cO_{m})}(x)| |\K_\ell^m|} .$ 
\end{enumerate}

\section{Uniqueness of $\theta$-Whittaker models}
In this section first we prove Theorem~\ref{main-theorem-2} and then use this to prove Theorem~\ref{thm:main-theorem}.  Let $\varphi$ be a fixed primitive character of $\cO_\ell.$ For $a \in \cO_{\ell}^\times$, define $\varphi_{a} : \cO_\ell \rightarrow \C^{\times}$ by $\varphi_a(x) = \varphi(ax)$. Then it is easy to see that $\varphi_a$ is a primitive character of $\cO_\ell$ and every primitive character of $\cO_\ell$ is of the form $\varphi_a$ for some $a \in \cO_\ell^\times$. 
For $a \in \cO_{\ell}^\times$, define $\theta_{a} : \mathbf{U}(\cO_\ell) \rightarrow \C^{\times}$ by
\begin{equation}
\theta_{a}((x_{ij})) := \varphi (a x_{12} + x_{23} + \cdots + x_{(n-1)n}).
\end{equation}
The subgroup of $\G(\Ol)$ consisting of all diagonal matrices (split torus) acts on $\mathbf{U}(\cO_\ell)$ by conjugation. It is to be noted that every one dimensional non-degenerate representation of $\mathbf{U}(\cO_\ell)$ is equivalent to $\theta_a$ for some $a \in \cO_\ell^\times$ under the conjugation action of the split torus. Therefore without loss of generality, it is enough to prove Theorems~\ref{thm:main-theorem} and \ref{main-theorem-2}  for $\theta = \theta_a$ for all $a \in \cO_\ell^\times$ and for any fixed primitive character $\varphi$ of $\cO_\ell$. 

\subsection{Proof of Theorem~\ref{main-theorem-2}}
\label{sec:proof-of-whittaker-model} 
Throughout this section we assume that either $\G = \SL_n$ with $(p,n) = (p,2) = 1$ or $\G = \GL_n$. For $1 \leq k < \ell,$ let $\mathbf{U}(\varpi^{k} \Ol)$ be the group of unipotent upper triangular matrices for which the entries in the strictly upper triangular part belong to $\varpi^{k}\Ol.$ We assume that $m$ is such that $\ell = 2m$ for even $\ell$ and $\ell = 2m+1$ for odd $\ell$. We also assume that primitive character $\varphi$ of $\cO_\ell$ used in the definition of the construction of regular representations of $\G(\cO_\ell)$ in Section~\ref{sec:regular-representation-construction} and in the definition of $\theta_a$ is the same one. The following definition plays an important role in what follows.

\begin{definition}\label{def:regular-representation}($\alpha$-regular representation of $\G(\cO_\ell)$) A regular representation $\rho$ of $\mathrm{G}(\cO_\ell)$ is called $\alpha$-regular for $\alpha \in \cO_{\lfloor \frac{\ell}{2} \rfloor}$ if there exists an $\alpha$-regular element $x \in \g(\cO_{\lfloor \frac{\ell}{2} \rfloor})$ such that $(\rho|_{K_\ell^{\lceil \frac{\ell}{2}\rceil}}, \varphi_x) \neq 0$ for $\varphi_x \in \widehat{K_\ell^{\lceil \frac{\ell}{2}\rceil}}$.
\end{definition}

We proceed to state and prove the main theorem of this article. As mentioned above, every non-degenerate character of $\mathbf U(\cO_\ell)$ is conjugate to $\theta_a$ for some $a \in \cO_\ell^\times$ and therefore Theorem~\ref{main-theorem-2} directly follows from Theorem~\ref{thm: existence}. For $\G(\cO_\ell) = \GL_n(\cO_\ell)$ and for any $a \in \cO_\ell^\times$, every regular representation is $\rho_{\ell, m}(a)$ regular. Therefore Corollary~\ref{cor:hill-regular-iff-whittaker} also follows from Theorem~\ref{thm: existence}.  
\begin{theorem}  \label{thm: existence}
\begin{enumerate}
\item An irreducible representation $\pi$ of $\G(\Ol)$ for $\ell \geq 2$ admits a $\theta_a$-Whittaker model if and only if it is $\rho_{\ell, m }(a)$-regular.
\item An irreducible representation of $\G(\cO_\ell)$ for $\ell \geq 2$ admits at most one $\theta_a$-Whittaker model for every $a \in \cO_\ell^\times$. 
\end{enumerate}
\end{theorem}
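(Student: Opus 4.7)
The plan is to combine an explicit construction of a $\theta_a$-Whittaker functional with a dimension count. For $\pi \in \Irr(\G(\cO_\ell))$, set $m_\pi := \dim \Hom_{\mathbf{U}(\cO_\ell)}(\theta_a, \pi|_{\mathbf{U}(\cO_\ell)})$. Frobenius reciprocity gives $\sum_\pi (\dim \pi)\, m_\pi = \dim \Ind_{\mathbf{U}(\cO_\ell)}^{\G(\cO_\ell)}(\theta_a) = [\G(\cO_\ell):\mathbf{U}(\cO_\ell)]$. The whole theorem will follow from two separate claims: (E) every $\rho_{\ell,m}(a)$-regular $\pi$ satisfies $m_\pi \geq 1$, and (C) the sum of $\dim \pi$ over the set $\mathcal{R}_a$ of all $\rho_{\ell,m}(a)$-regular irreducibles equals $[\G(\cO_\ell):\mathbf{U}(\cO_\ell)]$. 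Once both are established, the chain $[\G(\cO_\ell):\mathbf{U}(\cO_\ell)] = \sum_\pi (\dim \pi)\, m_\pi \geq \sum_{\pi \in \mathcal{R}_a}(\dim \pi)\, m_\pi \geq \sum_{\pi \in \mathcal{R}_a}\dim \pi = [\G(\cO_\ell):\mathbf{U}(\cO_\ell)]$ forces $m_\pi = 1$ on $\mathcal{R}_a$ and $m_\pi = 0$ outside it, yielding both parts at once.

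For (E) I will use the explicit construction from Section~\ref{sec:regular-representation-construction}. By Lemma~\ref{lem:about-regular-elements}(2), every $a$-regular orbit in $\g(\cO_{\lfloor \ell/2\rfloor})$ contains an element $x$ in the canonical form~\eqref{regular element} with $\rho_{\ell,m}(a)$ placed in the $(2,1)$-entry. A direct trace computation then shows that $\varphi_x$ and $\theta_a$ agree on $\K_\ell^{\lceil \ell/2\rceil}\cap \mathbf{U}(\cO_\ell)$. In the even case $\ell = 2m$, realise $\pi = \Ind_{I_{\G(\cO_\ell)}(\varphi_x)}^{\G(\cO_\ell)}(\widetilde{\varphi_x})$ as in~{\bf E.3}. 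Lemma~\ref{lem:stab-unipotent-intersection} together with~{\bf E.1}(1) forces $I_{\G(\cO_\ell)}(\varphi_x) \cap \mathbf{U}(\cO_\ell) = \K_\ell^m \cap \mathbf{U}(\cO_\ell)$, and on this intersection $\widetilde{\varphi_x}$ coincides with $\theta_a$ by the trace identity. Frobenius and Mackey applied to the trivial double coset $I_{\G(\cO_\ell)}(\varphi_x)\cdot \mathbf{U}(\cO_\ell)$ then produce a nonzero $\theta_a$-Whittaker functional on $\pi$. In the odd case $\ell = 2m+1$, realise $\pi = \Ind_{I_{\G(\cO_\ell)}(\sigma)}^{\G(\cO_\ell)}(\widetilde{\sigma})$ for a suitable $\sigma \in \Irr(H_\ell^m \mid \varphi_x)$ as in~{\bf O.3}--{\bf O.4}. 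The same Mackey argument reduces (E) to producing a $\theta_a|_{\K_\ell^m \cap \mathbf{U}(\cO_\ell)}$-eigenvector inside $\sigma$. Using $\sigma|_{R_{\bar x}} = (\dim \sigma)\,\widetilde{\varphi_x}$ from~{\bf O.3}(3) together with $R_{\bar x} \cap \mathbf{U}(\cO_\ell) = \K_\ell^{m+1} \cap \mathbf{U}(\cO_\ell)$ (another consequence of Lemma~\ref{lem:stab-unipotent-intersection} applied modulo $\K_\ell^{m+1}$) first gives an eigenvector at the level of $\K_\ell^{m+1}\cap\mathbf{U}(\cO_\ell)$; extending it to the larger abelian subgroup $\K_\ell^m \cap \mathbf{U}(\cO_\ell)$ will require choosing the maximal isotropic $J_x$ of~{\bf O.3}(2) in a position compatible with $\mathbf{U}(\cO_\ell)$.

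For (C) I will combine Lemma~\ref{lem:about-regular-elements}(3) with the dimension formulas of Section~\ref{sec:regular-representation-construction}. The number of $\G(\cO_m)$-orbits of $a$-regular elements of $\g(\cO_m)$ is $q^{d_{\g(\cO_1)}(\bar x)\,m}$, where $d_{\g(\cO_1)}(\bar x) = n$ for $\GL_n$ and $n-1$ for $\SL_n$. For each such orbit $[\varphi_x]$, steps~{\bf E.4}--{\bf E.5} give $\sum_{\pi \in \Irr(\G(\cO_\ell) \mid \varphi_x)} \dim \pi = |\G(\cO_\ell)|/|\K_\ell^m|$ in the even case, while step~{\bf O.7} gives $q^{(d_\g + d_{\g(\cO_1)}(\bar x))/2}\cdot |\G(\cO_\ell)|/|\K_\ell^m|$ in the odd case. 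Using $|\K_\ell^m| = q^{d_\g(\ell-m)}$ and $|\mathbf{U}(\cO_\ell)| = q^{\binom{n}{2}\ell}$, a short $q$-exponent calculation, carried out uniformly across the four cases (even or odd $\ell$; $\GL_n$ or $\SL_n$), verifies $\sum_{\pi \in \mathcal{R}_a}\dim \pi = [\G(\cO_\ell):\mathbf{U}(\cO_\ell)]$.

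The principal obstacle will be (E) in the odd case: since $\sigma$ is no longer one-dimensional and is constructed via a Heisenberg-like induction from the Lagrangian $J_x$ in the symplectic quotient $H_\ell^m/R_{\bar x}$, extracting a $\theta_a$-eigenvector for the action of $\K_\ell^m \cap \mathbf{U}(\cO_\ell)$ (rather than only for the smaller subgroup $\K_\ell^{m+1} \cap \mathbf{U}(\cO_\ell)$) demands a delicate compatibility between the image of $\K_\ell^m \cap \mathbf{U}(\cO_\ell)$ in $H_\ell^m/R_{\bar x}$ and the chosen $J_x$. I expect this compatibility to be verified by exploiting the canonical form of $x$ together with the triviality assertion of Lemma~\ref{lem:stab-unipotent-intersection} at the residue-field level.
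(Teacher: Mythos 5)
Your proposal follows essentially the same route as the paper: the same Frobenius/Mackey reduction of existence to the single double coset $I(\sigma)\cdot\mathbf{U}(\cO_\ell)$, the same count of $\sum_{\pi\in\mathcal{R}_a}\dim\pi$ via the number of $a$-regular classes and the dimension formulas of {\bf E.4}--{\bf E.5} and {\bf O.7}, and the same sandwich argument forcing $m_\pi=1$ on $\mathcal{R}_a$ and $0$ elsewhere. The one step you flag as the principal obstacle --- extracting a $\theta_a$-eigenvector from $\sigma$ for $\mathbf{U}(\varpi^m\cO_\ell)$ in the odd case --- is closed in the paper without positioning $J_x$: one forms the product character $\widetilde{\varphi_x}\circ\theta_a$ on $R_{\bar x}\,\mathbf{U}(\varpi^m\cO_\ell)$ (using that $\widetilde{\varphi_x}$ is $\K_\ell^m$-stable and agrees with $\theta_a$ on $R_{\bar x}\cap\mathbf{U}(\varpi^m\cO_\ell)=\mathbf{U}(\varpi^{m+1}\cO_\ell)$) and checks by a dimension count, together with the uniqueness of $\sigma$ above $\widetilde{\varphi_x}$, that this character induces irreducibly to $\sigma$, whence $\sigma|_{\mathbf{U}(\varpi^m\cO_\ell)}$ contains $\theta_a$.
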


\begin{proof}

First, we prove that for $a \in \cO_\ell^\times$, every $\rho_{\ell, m}(a)$-regular representation admits a $\theta_a$-Whittaker model. Define $N_\ell^m$ as follows. 
\[
N_\ell^m = \begin{cases} K_\ell^m, & \ell = 2m \\ 
 H_\ell^m, & \ell = 2m+1,
  \end{cases}
\]
where $K_\ell^m$ and $H_\ell^m$ as defined in the Section~\ref{sec:regular-representation-construction}. 
Since $\pi$ is a $\rho_{\ell, m }(a)$-regular representation of $\G(\OO_{\ell})$ by construction of $\pi$ (see {\bf E.3} and { \bf O.4 }) there exists an irreducible representation $\sigma$ of $N_{\ell}^{m}$ which lies over a regular one dimensional representation $\varphi_{x}$ of $\sfK_{\ell}^{\lceil \frac{\ell}{2}\rceil}$ for some $\rho_{\ell, m}(a)$-regular element $x$ and $\sigma$ extends to the inertia group $I_{\G(\OO_{\ell})}(\sigma)$ written as $\widetilde{\sigma}$ with the property that 
\[
\pi \cong \Ind_{I_{\G(\OO_{\ell})}(\sigma)}^{\G(\OO_{\ell})} \widetilde{\sigma}.
\]

Write $\so$ for the inertia group $I_{\G(\OO_{\ell})}(\sigma).$  By using Frobenius reciprocity and Mackey theory we get
\[
\Hom_{\G(\OO_{\ell})} \left(\pi, \Ind_{\mathbf{U}(\Ol)}^{\G(\Ol)}(\theta_a) \right)  =  \bigoplus_{g \in \so \backslash \G(\OO_{\ell}) / \mathbf{U}(\OO_{\ell})}  \Hom_{\mathbf{U}(\OO_{\ell}) \cap (\so)^{g}} \left(\widetilde{\sigma}^{g}, \theta_a \right)
\]
To prove $\Hom_{\G(\OO_{\ell})} \left(\pi, \Ind_{\mathbf{U}(\Ol)}^{\G(\Ol)}(\theta_a) \right) \neq 0$ it is enough to prove that $\Hom_{\mathbf{U}(\OO_{\ell}) \cap \so} \left(\widetilde{\sigma}, \theta_a \right) \neq 0$ and this we prove in next few lemmas.

\begin{lemma}
\label{lem:intersection}
For $\ell = 2m$, let $x \in \g(\cO_{m})$ be a $\rho_{\ell, m}(a)$-regular element. The following are true. 
\begin{enumerate}
\item The intersection $\mathbf{U}(\OO_{\ell}) \cap I_{\G(\OO_{\ell})} ( \sigma) = \mathbf{U}(\varpi^{m} \OO_{\ell}).$
\item We have $\Hom_{ \mathbf{U}(\varpi^{m} \OO_{\ell})} \left( \widetilde{\sigma}, \theta_{a } \right) \neq 0.$
\end{enumerate}
\end{lemma}

\begin{proof}
For $\ell = 2m$, we have $N_\ell^m = \K_\ell^m$ and therefore $\sigma = \varphi_x$ in this case. Then $I_{\G(\cO_\ell)}(\varphi_x) = \tC_{\G(\lri_\ell)}(\tilde{x}) \K_\ell^m$ for any lift $\tilde{x}$ of $x$ by {\bf E.1}. it is clear that $\mathbf{U}(\varpi^{m} \OO_{\ell}) \subseteq \mathbf{U}(\OO_{\ell}) \cap I_{\G(\OO_{\ell})} ( \varphi_{x}) ,$ so we proceed to prove the other side inclusion. 
The element $x$ is $\rho_{\ell, m }(a)$-regular. Therefore by Lemma~\ref{lem:stab-unipotent-intersection}, we have $\mathbf{U}(\cO_\ell) \cap C_{\G(\cO_\ell)} (\tilde{x}) = \{ 1 \} .$ This implies,  any $v \in \mathbf{U}(\OO_{\ell}) \cap I_{\G(\OO_{\ell})} ( \varphi_{x})$ satisfies $v \in \K_\ell^m$ This along with $v \in \mathbf{U}(\cO_\ell)$ gives $v \in \mathbf{U}(\varpi^{m} \OO_{\ell}).$ Therefore (1) follows.

The proof of (2) amounts to say that the restriction of $\widetilde{\varphi}_{x}$ to $\mathbf{U}(\varpi^{m} \OO_{\ell})$ is same as the restriction of $\theta_a$ to $\mathbf{U}(\varpi^{m} \OO_{\ell}).$
Which is clear since $x$ is $\rho_{\ell, m}$-regular and therefore $\widetilde{\varphi_x}(y) = \theta_a(y)$ for every $y \in \mathbf{U}(\varpi^{m} \OO_{\ell}).$
This proves part (2).
\end{proof}

\begin{lemma}
\label{lem:decomposition-of-sigma}
For $\ell = 2m +1 ,$ consider the set $\r = \mathrm{Irr}\left( \mathbf{U}(\varpi^m \cO_\ell ) \mid \theta_a|_{\mathbf{U}(\varpi^{m+1} \cO_\ell ) }  \right).$ The following are true. 
\begin{enumerate} 
\item The set $\r$ consists of one dimensional representations of $\mathbf{U}(\varpi^m \cO_\ell ).$ 
\item If  $x \in \g(\cO_m)$ is a $\rho_{\ell, m}(a)$-regular element and $\sigma \in \mathrm{Irr} \left( H_\ell^m \mid \varphi_x \right).$ Then, 
\[
\sigma|_{\mathbf{U}(\varpi^m \cO_\ell )} \cong \underset{\chi \,\, \in \,\, \r}{\oplus} \,\,\chi.  
\]
\end{enumerate} 
\end{lemma}  

\begin{proof} The one dimensional representation $\theta_a|_{\mathbf{U}(\varpi^{m} \cO_\ell )}$ extends the character $\theta_a|_{\mathbf{U}(\varpi^{m+1} \cO_\ell )}$ to group $\mathbf{U}(\varpi^m \cO_\ell ).$ The quotient group $\frac{\mathbf{U}(\varpi^m \cO_\ell )}{\mathbf{U}(\varpi^{m+1} \cO_\ell ) } $ is abelian, so (1) follows from Clifford theory. For (2), consider the normal subgroup  $R_{\xx} = {\K^{m}_\ell \cap \tC_{\G(\lri_\ell)}(\widetilde{x})}\K_\ell^{m+1}$ of $\K_\ell^m.$ By ${\bf O. 3},$ for $\sigma \in \mathrm{Irr} \left( \K_\ell^m \mid \varphi_x \right),$ there exists an extension $\widetilde{\varphi_x}$ of $\varphi_x$ to $R_{\xx}$ such that $\sigma|_{R_{\xx}} = \underbrace{\widetilde{\varphi_x} + \cdots + \widetilde{\varphi_x} }_{(d_{\g}-d_{\g(\cO_1)}(\bar{x}))/2-\mathrm{times}}$ and $\sigma$ is unique irreducible representation of $\K_\ell^m$ lying above $\widetilde{\varphi_x}.$ 

Consider the hierarchy of groups given in Figure~\ref{figure:decomposition-to-U}. By Lemma~\ref{lem:stab-unipotent-intersection}, we have ${R_\xx} \cap \mathbf{U}(\varpi^{m}\cO_\ell ) = \mathbf{U}(\varpi^{m+1}\cO_\ell ).$ Since $x$ is $\rho_{\ell, m}(a)$-regular, we have 
\[
\widetilde{\varphi_x}|_{\mathbf{U}(\varpi^{m+1}\cO_\ell )} = {\varphi_x}|_{\mathbf{U}(\varpi^{m+1}\cO_\ell )} = \theta_a|_{\mathbf{U}(\varpi^{m+1}\cO_\ell)} 
\]
and $\widetilde{\varphi_x}$ is stable under $\mathbf{U}(\varpi^{m}\cO_\ell ),$ so the map  $\widetilde{\varphi_x} \circ\theta_a : (R_\xx)(\mathbf{U}(\varpi^{m}\cO_\ell )) \rightarrow \mathbb C^\times$ defined by $(\widetilde{\varphi_x} \circ \theta_a)(xy) = \widetilde{\varphi_x}(x) \theta_a(y)$ for all $x \in R_\xx$ and $y \in \mathbf{U}(\varpi^{m}\cO_\ell )$ is a well defined one dimensional representation of $(R_\xx)(\mathbf{U}(\varpi^{m}\cO_\ell )) .$ Further, the one dimensional representation $\widetilde{\varphi_x} \circ \theta_a$ extends $\widetilde{\varphi_x}.$ The quotient group $(R_\xx)(\mathbf{U}(\varpi^{m}\cO_\ell ))/R_\xx$ is easily see to be abelian of order $q^{\frac{d_{\g}-d_{\g(\cO_1)}(\bar{x})}{2}}.$ Therefore by Clifford theory, the set $\mathrm{Irr} \left( (R_\xx)(\mathbf{U}(\varpi^{m}\cO_\ell ))   \mid \widetilde{\varphi_x} \right) $ consists of exactly $q^{\frac{d_{\g}-d_{\g(\cO_1)}(\bar{x})}{2}}$-many distinct one dimensional representations. 

We claim that any $\chi \in \mathrm{Irr} \left( (R_\xx)(\mathbf{U}(\varpi^{m}\cO_\ell ))   \mid \widetilde{\varphi_x} \right) $ satisfies $\mathrm{Ind}_{(R_\xx)(\mathbf{U}(\varpi^{m}\cO_\ell ))} ^{H_\ell^m} (\chi) \cong \sigma.$ Recall $\sigma$ is a unique irreducible representation of $H_\ell^m$ lying above $\widetilde{\varphi_x}$ and therefore we must have  $\langle \sigma, \mathrm{Ind}_{(R_\xx)(\mathbf{U}(\varpi^{m}\cO_\ell )} ^{H_\ell^m} (\chi) \rangle \neq 0 $ for every $\chi \in  \mathrm{Irr} \left( (R_\xx)(\mathbf{U}(\varpi^{m}\cO_\ell ))   \mid \widetilde{\varphi_x} \right) $ . Now the claim follows because 
\[\dim(\sigma) = \dim(\mathrm{Ind}_{(R_\xx)(\mathbf{U}(\varpi^{m}\cO_\ell ))} ^{H_\ell^m} (\chi)) = {q^{\frac{d_{\g}-d_{\g(\cO_1)}(\bar{x})}{2}}}, 
\]
for every $\chi \in  \mathrm{Irr} \left( (R_\xx)(\mathbf{U}(\varpi^{m}\cO_\ell ))   \mid \widetilde{\varphi_x} \right) .$ 

\begin{figure}[htb!]
\label{figure:decomposition-to-U}
\centering
\begin{displaymath}
\xymatrix{
&  H_\ell^m\ar@{-}[d]^{q^{\frac{d_{\g}-d_{\g(\cO_1)}(\bar{x})}{2}}}  &                        \\
   &    (R_\xx)(\mathbf{U}(\varpi^{m}\cO_\ell )) \ar@{-}[dr]^{q^{d_{\g(\cO_1)}(\bar{x})}} \ar@{-}[dl]^{q^{\frac{d_{\g}-d_{\g(\cO_1)}(\bar{x})}{2}}}   &            \\ 
{R_\xx}\ar@{-}[dr]  &     &  \mathbf{U}(\varpi^{m}\cO_\ell )\ar@{-}[dl]      \\
& \mathbf{U}(\varpi^{m+1}\cO_\ell )\ar@{-}[d] &   \\
 & {\{1\}}& }
\end{displaymath}
\caption{}
 \end{figure}

\end{proof}

\begin{lemma}
For $\ell = 2m+1,$ let $x \in \g(\cO_m)$ be a $\rho_{\ell, m }(a)$-regular element and $\sigma \in \mathrm{Irr}(H_{\ell}^{m} \mid \varphi_{x}).$ 
Then the following are true. 
\begin{enumerate}
\item The intersection $\mathbf{U}(\OO_{\ell}) \cap I(\sigma) =  \mathbf{U}(\varpi^{m} \cO_\ell).$
\item We have, $\Hom_{\mathbf{U}(\varpi^{m} \OO_{\ell})} \left(\widetilde{\sigma}, \theta_a \right) = \Hom_{\mathbf{U}(\varpi^{m} \OO_{\ell})} \left(\sigma, \theta_a \right) \neq 0.$
\end{enumerate} 
\end{lemma}

\begin{proof}
For (1), we note that  by ${\bf O.4}$, $I(\sigma) = I_{\G(\OO_{\ell})} (\varphi_{x}) .$ The rest of the proof is similar to Lemma~\ref{lem:intersection}(1). For (2), note that the dimension of $\sigma$ is $q^{\frac{d_{\g}-d_{\g(\cO_1)}(\bar{x})}{2}}.$
By Lemma~\ref{lem:decomposition-of-sigma}, we have $\sigma|_{\mathbf{U}(\varpi^{m} \OO_{\ell})}$ is direct sum of $ {q^{\frac{d_{\g}-d_{\g(\cO_1)}(\bar{x})}{2}}}$ distinct one dimensional representations of $\mathbf{U}(\varpi^{m} \OO_{\ell}).$
We also know that $\sigma|_{\sfK_{\ell}^{m +1}}$ is isomorphic to direct sum of $q^{\frac{d_{\g}-d_{\g(\cO_1)}(\bar{x})}{2}}$ times $\varphi_{x}.$ 
Therefore all the one dimensional representations of $\sigma|_{\mathbf{U}(\varpi^{m} \OO_{\ell})}$ are extensions of $\varphi_{x}|_{\mathbf{U}(\varpi^{m+1} \OO_{\ell})}.$
Note that the index $[\mathbf{U}(\varpi^{m} \OO_{\ell}) : \mathbf{U}(\varpi^{m +1} \OO_{\ell})] = q^{\frac{d_{\g}-d_{\g(\cO_1)}(\bar{x})}{2}} = q^{\frac{n(n-1)}{2}}$ which is the same as the dimension of $\sigma.$ 
Thus $\sigma|_{\mathbf{U}(\varpi^{m} \OO_{\ell})}$ is direct sum of all the possible extensions of $\varphi_{x}|_{\mathbf{U}(\varpi^{m +1} \OO_{\ell})}$ to the subgroup $\mathbf{U}(\varpi^{m} \OO_{\ell}).$ 
By hypothesis, the element $x$ is $\rho_{\ell, m }(a)$-regular therefore $\theta_a|_{\mathbf{U}(\varpi^{m +1} \OO_{\ell})} = \varphi_{x}|_{\mathbf{U}(\varpi^{m +1} \OO_{\ell})}$. This implies that $\theta_a|_{\mathbf{U}(\varpi^{m} \OO_{\ell})}$ is an extension of $\varphi_{x}$ to $\mathbf{U}(\varpi^{m} \OO_{\ell})$. This combined with the above discussion regarding $\sigma|_{\mathbf{U}(\varpi^{m} \OO_{\ell})}$ gives 
$\Hom_{\mathbf{U}(\varpi^{m} \OO_{\ell})} (\sigma, \theta_a) \neq 0.$
\end{proof}

This completes the proof of the fact that for $a \in \cO_\ell^\times$ every $\rho_{\ell, m}(a)$ regular representation of $\G(\cO_\ell)$ admits a $\theta_a$-Whittaker model. Next, we prove that the sum of the dimensions of all inequivalent $\rho_{\ell, m }(a)$-regular representations of $\G(\Ol)$ is equal to the dimension of the induced representation $\Ind_{\mathbf{U}(\Ol)}^{\G(\Ol)}(\theta_a).$ 

\begin{lemma}
Let $\mathcal{R}_a$ be the set of isomorphism classes of $\rho_{\ell, m }(a)$-regular  representations of $\G(\OO_{\ell})$ and $\bar{x} \in \g(\mathbb F_q)$ be a regular element. Then the following is true. 
\[
\sum_{\pi \in \mathcal{R}_a} \dim(\pi) = \begin{cases} q^{\left(d_{\g(\cO_1)}(\bar{x}) \right)m} | \G(\cO_m) |, & \mathrm{for} \,\, \ell = 2m \\  
q^{ \left(d_{\g(\cO_1)}(\bar{x})\right)m} \cdot q^{\frac{ d_{\g}+\left(d_{\g(\cO_1)}(\bar{x})\right)}{2}} | \G(\cO_m) |, & \mathrm{for} \,\, \ell = 2m+1. 
\end{cases}
\]
\end{lemma}
\begin{proof}

We prove this result for cases $\ell = 2m$ and $\ell = 2m+ 1$ separately. 

For $\ell = 2m,$ by ${\bf E.4}$ and ${\bf E.5},$ the sum of dimensions of all inequivalent $\rho_{\ell, m }(a)$-regular representations lying above the one dimensional representation $\varphi_{x}$ for any $\rho_{\ell, m }(a)$-regular $x \in \g(\cO_m)$ is the index $[\G(\OO_{\ell}) : \sfK_{\ell}^{m}]= |\G(\cO_m) |.$
We recall that, the number of $\rho_{\ell, m }(a)$-conjugacy classes of regular elements in $\g(\cO_m)$ is $q^{ \left(d_{\g(\cO_1)}(\bar{x}) \right) m}$ (by Lemma \ref{lem:about-regular-elements}). Therefore
$\sum_{\pi \in \mathcal{R}_a} \dim(\pi) $ is $q^{ \left(d_{\g(\cO_1)}(\bar{x}) \right) m} |\G(\cO_m) |$ for $\ell = 2m.$ 

For $\ell = 2m+ 1,$ by ${\bf O.7},$ the sum of the dimensions of all inequivalent $\rho_{\ell, m }(a)$-regular representations lying over the one dimensional representation $\varphi_{x}$ of $\K_\ell^{m+1}$ for any $\rho_{\ell, m}(a)$-regular $x \in \g(\cO_m)$ is $q^{\frac{ d_{\g}+\left(d_{\g(\cO_1)}(\bar{x})\right)}{2}}|\G(\cO_m) |.$ Multiplying this with the number of $\rho_{\ell, m }(a)$-regular conjugacy classes of $\g(\cO_m),$ we obtain our result even in this case. 
\end{proof} 
Next, we proceed to prove that the dimension of $\Ind_{\mathbf{U}(\Ol)}^{\G(\Ol)}( \theta_a)$ is same as $\sum_{\pi \in \mathcal{R}_a} \dim(\pi)$ for all $\ell.$ 
We note that $| \mathbf{U}(\OO_{\ell}) | = q^{\frac{n(n-1)}{2}\ell} = q^{\frac{d_{\g} - \left(d_{\g(\cO_1)}(\bar{x})\right)}{2} \ell} $ for all $\ell.$ Consider the short exact sequence,  \[
1 \rightarrow \sfK_{\ell}^{m} \rightarrow \G(\OO_{\ell}) \rightarrow \G(\cO_m) \rightarrow 1.
\]
Therefore, $| \G(\OO_{\ell}) | = |\K^m_\ell| | \G(\cO_m) | $ and we have the following, 
\[
|\K^m_\ell| = \begin{cases}   q^{(d_{\g}) m}, & \mathrm{for}\,\, \ell = 2m\\   q^{d_{\g}(m +1)}, & \mathrm{for} \,\, \ell = 2m+ 1.\end{cases}
\]
Since, the dimension of $\Ind_{\mathbf{U}(\Ol)}^{\G(\Ol)}(\theta)$ is equal to $[\G(\OO_{\ell}) : \mathbf{U}(\OO_{\ell})],$ we obtain the result by substituting $|\G(\OO_{\ell})|$ and $|\mathbf{U}(\OO_{\ell})|$ from above. We have proved that every $\pi \in \mathcal{R}_a$ is a constituent of  $\Ind_{\mathbf{U}(\Ol)}^{\G(\Ol)}(\theta)$ and the dimension of $\Ind_{\mathbf{U}(\Ol)}^{\G(\Ol)}( \theta_a)$ is equal to $\sum_{\pi \in \mathcal{R}_a} \dim(\pi)$ for all $\ell \geq 2.$ Both of these facts together complete the proof of Theorem~\ref{thm: existence}.  
\end{proof} 

\subsection{Proof of Theorem~\ref{thm:main-theorem}}
\label{sec:proof-of-multiplicity-one}

Now we are in a position to complete the proof of Theorem~\ref{thm:main-theorem}.  For $\ell \geq 2$ and for the groups $\G = \GL_n$ or $\G = \SL_n$ with $(p, n) = (p,2) = 1,$ Theorem~\ref{thm:main-theorem} follows directly from Theorem~\ref{main-theorem-2}. For $\G = \SL_n$ with $p \mid n$ or $p=2,$ we note that $\mathrm{Ind}_{\mathbf {U}(\cO_\ell)}^{\GL_n(\cO_\ell)}(\theta) \cong \mathrm{Ind}_{\SL_n(\cO_\ell)}^{\GL_n(\cO_\ell)}\left( \mathrm{Ind}_{\mathbf {U}(\cO_\ell)}^{\SL_n(\cO_\ell)}(\theta)\right) $ and $\mathrm{Ind}_{\mathbf {U}(\cO_\ell)}^{\GL_n(\cO_\ell)}(\theta)$ is multiplicity free for non-degenerate $\theta$ by Theorem~\ref{main-theorem-2}. Therefore $\mathrm{Ind}_{\mathbf {U}(\cO_\ell)}^{\SL_n(\cO_\ell)}(\theta)$ must be multiplicity free for every non-degenerate character $\theta$ of $\cO_\ell.$

\section{Examples: $\GL_2$ and $\SL_2$} 
\label{sec:examples}

In this section, we discuss few details regarding the irreducible representations $\GL_2(\cO_\ell)$ and $\SL_2(\cO_\ell)$ for $\ell \geq 2$ to bring out the differences in these cases regarding Theorem~\ref{main-theorem-2}. 
Throughout this section, we assume that  $2 \nmid |\cO/\wp|$. 

In the construction of irreducible representations of $\G(\cO_\ell)$ for $\G = \GL_n$ or $\SL_n$ one usually focuses on the construction of its primitive irreducible representations. These are precisely the representations that can not be obtained via pull back from the representations of $\G(\cO_i)$ for $i < \ell$. Formally, the primitive representations of $\G(\cO_\ell)$ are defined as below. 
\begin{definition}
	An irreducible representation $\rho$ of $\G(\cO_\ell)$ is called {\rm primitive} if the orbit of its restriction to $\K_\ell^{\ell-1} $ does not contain a one dimensional representation $ \varphi_x $ where $x$ is a scalar matrix.
\end{definition}  It is to be noted that by definition every regular representation is primitive but converse need not to be true. However it turns out to be true for $\GL_2(\cO_\ell)$ and $\SL_2(\cO_\ell)$ due to the fact that any $x \in M_2(\mathbb F_q)$ is either a scalar matrix or a cyclic matrix. Therefore every one dimensional representation $\varphi_x$ of $\K_\ell^{\ell -1}$ is such that either $x$ is regular or scalar. In particular, an irreducible representation $\rho$ of $\GL_2(\cO_\ell)$ or $\SL_2(\cO_\ell)$ is regular if and only if it is primitive. Let $\mathcal{P}_{\GL_2(\cO_\ell)}$ and  $\mathcal{P}_{\SL_2(\cO_\ell)}$ be the set of inequivalent primitive  irreducible representations of $\GL_2(\cO_\ell)$ and $\SL_2(\cO_\ell)$ respectively. These sets are further partitioned into three types as follows. Any irreducible representation $\rho \in \mathcal{P}_{\GL_2(\cO_\ell)}$ or $\mathcal{P}_{\SL_2(\cO_\ell)}$ is called (of type) {\it cuspidal}, {\it split non-semisimple} and {\it split semisimple} if $\langle \rho|_{K_\ell^{\ell-1}} , \varphi_x   \rangle \neq 0$ implies characteristic polynomial of $x$ is irreducible, splits over $\mathbb F_q$ and has equal roots and splits over $\mathbb F_q$ with distinct roots  respectively.  
For the groups $\GL_2(\cO_\ell)$, the information  known regarding the dimensions of all elements of $\mathcal{P}_{\GL_2(\cO_\ell)}$, see \cite[Theorem~1.4]{MR2456275} is given in Table~\ref{GL2-reps}. 

\begin{table}[h]

\begin{center}

\begin{tabular}{c|c|c|c}
S.No.& Type of $\rho$  &  Number of $  \rho \in \mathcal{P}_{\GL_2(\cO_\ell)} $  & dimension of $\rho$  \\
\hline 
 \hline 
  
1. &  cuspidal & $\frac{1}{2} (q-1)(q^2-1) q^{2 \ell -3}$ & $q^{\ell-1}(q-1)$ \\
2. & split non-semisimple  & $(q-1) q^{2 \ell -2}$ & $(q^2-1)q^{\ell-2}$ \\
3. & split semisimple  & $\frac{1}{2} q^{2\ell-3} (q-1)^3 $ &  $q^{\ell-1} (q+1)$

\end{tabular}
\end{center}

\caption{ Dimensions of regular representations for  $\GL_2(\cO_\ell)$}
\label{GL2-reps}
\end{table} 

So in this case, we obtain the following.
 \[
 \sum_{\rho \in \mathcal{P}_{\GL_2(\cO_\ell) }} \dim(\rho) = (q^2 - 1) (q-1) q^{3 \ell -3} = \left|\frac{\GL_2(\cO_\ell)}{\mathbf U (\cO_\ell)}\right|.
 \] In case of $\GL_2(\mathbb F_q)$, it is well known that an irreducible representation of $\GL_2(\mathbb F_q)$ admits a $\theta$-Whittaker model for a non-degenerate $\theta$ if and only if it has dimension greater than one and this result by Theorem~\ref{main-theorem-2} and above computations generalize to $\GL_2(\cO_\ell)$ as follows.

\begin{theorem}
	
	\label{thm:primitive}  An irreducible representation of $\GL_2(\cO_\ell)$  for $\ell \geq 2$ admits a $\theta$-Whittaker model for any non-degenerate character $\theta$ of $\cO_\ell$ if and only if it is primitive. 
	
\end{theorem}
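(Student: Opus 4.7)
The plan is to reduce the statement to the regular-versus-primitive equivalence in rank two and then apply Corollary~\ref{cor:hill-regular-iff-whittaker}. First I would invoke the fact, noted right before the theorem statement, that every $x \in M_2(\mathbb F_q)$ is either a scalar matrix or a cyclic (equivalently regular) matrix. Hence any one dimensional character $\varphi_x$ of $\K_\ell^{\ell-1} \cong \g(\mathbb F_q)$ is either of scalar type or of regular type, from which it follows that an irreducible representation $\rho$ of $\GL_2(\cO_\ell)$ is primitive if and only if the $\GL_2(\cO_\ell)$-orbit of $\rho|_{\K_\ell^{\ell-1}}$ consists entirely of regular characters, that is, if and only if $\rho$ is regular in the sense of Definition~\ref{def:regular-over-field}.

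Given this equivalence, the theorem amounts to showing that $\rho \in \mathrm{Irr}(\GL_2(\cO_\ell))$ is regular if and only if $\rho$ admits a $\theta$-Whittaker model for every non-degenerate $\theta$ of $\mathbf U(\cO_\ell)$. Both directions fall out of Corollary~\ref{cor:hill-regular-iff-whittaker}, which asserts that for $\GL_n(\cO_\ell)$ and any single non-degenerate character $\theta$, an irreducible representation admits a $\theta$-Whittaker model if and only if it is regular. Since the right hand side of this biconditional is intrinsic to $\rho$ and does not depend on $\theta$, applying the corollary with each non-degenerate $\theta$ in turn shows that admitting a $\theta$-Whittaker model for some non-degenerate $\theta$, for every non-degenerate $\theta$, and being regular, are all mutually equivalent. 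Combined with the primitive $\Leftrightarrow$ regular reduction of the previous paragraph, this yields the stated characterization.

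As a consistency check, not required for the proof but illuminating, the dimension data collected in Table~\ref{GL2-reps} gives
\[
\sum_{\rho \in \mathcal{P}_{\GL_2(\cO_\ell)}} \dim(\rho) = (q-1)(q^2-1)q^{3\ell-3} = [\GL_2(\cO_\ell) : \mathbf U(\cO_\ell)] = \dim \mathrm{Ind}_{\mathbf U(\cO_\ell)}^{\GL_2(\cO_\ell)}(\theta),
\]
and, together with the multiplicity-freeness asserted by Theorem~\ref{thm:main-theorem}, this confirms that the constituents of the Gelfand--Graev representation are exactly the primitive representations, each occurring exactly once. The only conceptual point, which is not really an obstacle, is the rank-two observation that scalar and cyclic exhaust $M_2(\mathbb F_q)$; once that is in place, the theorem is an immediate consequence of Corollary~\ref{cor:hill-regular-iff-whittaker}.
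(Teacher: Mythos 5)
Your proposal is correct and follows essentially the same route as the paper: the paper likewise observes that every element of $M_2(\mathbb F_q)$ is either scalar or cyclic, so that primitive and regular coincide for $\GL_2(\cO_\ell)$, and then deduces the theorem from the regular-iff-Whittaker characterization (Theorem~\ref{main-theorem-2} together with Corollary~\ref{cor:hill-regular-iff-whittaker}), with the dimension count from Table~\ref{GL2-reps} serving, as in your write-up, as a confirmation that the primitive representations exhaust $\mathrm{Ind}_{\mathbf U(\cO_\ell)}^{\GL_2(\cO_\ell)}(\theta)$. Your explicit remark that one must invoke the corollary for each fixed $\theta$ (rather than the ``for some $\theta$'' form of Theorem~\ref{main-theorem-2}) to obtain the ``for any non-degenerate $\theta$'' statement is a point the paper leaves implicit, but the argument is the same.
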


It is easy to see that the above result does not hold for $\GL_n(\cO_\ell)$ with $\ell \geq 2$ and $n \geq 3$. In comparison of the above result with $\SL_2(\cO_\ell)$, the numbers and dimensions of elements of $\mathcal{P}_{\SL_2(\cO_\ell)}$ are as given in Table~\ref{SL2-reps}, see \cite[Section~7]{MR2169043}.  
\begin{table}
\begin{center}
\begin{tabular}{c|c|c|c}
S.No.& Type of $\rho$  & Number of  $ \rho \in \mathcal{P}_{\SL_2(\cO_\ell)} $  & dimension of $\rho$ \\
\hline 
 \hline 
  
1. & cuspidal  & $\frac{1}{2} (q^2-1) q^{\ell -2}$ & $q^{\ell-1}(q-1)$ \\
2. & split non-semisimple & $4q^{\ell -1}$ & $\frac{1}{2}(q^2-1)q^{\ell-2}$ \\
3. & split semisimple & $\frac{1}{2} q^{\ell-2} (q-1)^2 $ &  $q^{\ell-1} (q+1)$

\end{tabular}
\end{center}
\caption{ Dimensions of regular representations for  $\SL_2(\cO_\ell)$}
\label{SL2-reps}
\end{table}
For group $\SL_2(\cO_\ell)$, we have the following.  
\[
\sum_{\rho \in \mathcal{P}_{\SL_2(\cO_\ell) }} \dim(\rho) = (q^2-1)(q+1)q^{2 \ell -3} \gneq (q^2-1) q^{2 \ell -4} = \left|\frac{\SL_2(\cO_\ell)}{\mathbf U (\cO_\ell)}\right|.
\] 
This in particular implies that for any non-degenerate character $\theta$ of $\mathbf U(\cO_\ell)$, there exists regular(primitive) representations $\rho_1$ and $\rho_2$ of $\SL_2(\cO_\ell)$ such that $\rho_1$ admits a $\theta$-Whittaker model but $\rho_2$ does not admit a $\theta$-Whittaker model. Further it is to be noted from the dimensions given in Tables~\ref{GL2-reps} and \ref{SL2-reps} that the cuspidal and split semisimple representations of $\GL_2(\cO_\ell)$ in fact restrict to irreducible representations of $\SL_2(\cO_\ell)$. This in particular implies the following. 
\begin{corollary}
Any cuspidal or split-semisimple regular representation of the group $\SL_2(\cO_\ell)$ admits a  $\theta$-Whittaker model for any non-degenerate character $\theta$ of $\mathbf U(\cO_\ell)$. 
\end{corollary}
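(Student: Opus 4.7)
The plan is to leverage the observation already highlighted just before the corollary: the cuspidal and split--semisimple regular representations of $\SL_2(\cO_\ell)$ are precisely the restrictions of the corresponding regular representations of $\GL_2(\cO_\ell)$. Once this extension is in hand, the conclusion follows by transferring Corollary~\ref{cor:hill-regular-iff-whittaker} from $\GL_2(\cO_\ell)$ down to $\SL_2(\cO_\ell)$ via a two-step Frobenius reciprocity.

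First, I would confirm the extension statement by comparing the dimension data in Tables~\ref{GL2-reps} and \ref{SL2-reps}: a cuspidal $\rho \in \mathcal{P}_{\SL_2(\cO_\ell)}$ has dimension $q^{\ell-1}(q-1)$, which equals the dimension of a cuspidal $\widetilde{\rho} \in \mathcal{P}_{\GL_2(\cO_\ell)}$; similarly for the split--semisimple type the common dimension is $q^{\ell-1}(q+1)$. Since in both cases $\widetilde{\rho}|_{\SL_2(\cO_\ell)}$ already contains $\rho$ as a constituent (Clifford theory applied to the normal subgroup $\SL_2(\cO_\ell) \trianglelefteq \GL_2(\cO_\ell)$), the equality of dimensions forces $\widetilde{\rho}|_{\SL_2(\cO_\ell)} \cong \rho$.

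Now fix an arbitrary non-degenerate character $\theta$ of $\mathbf{U}(\cO_\ell)$. By Corollary~\ref{cor:hill-regular-iff-whittaker} applied to the regular representation $\widetilde{\rho}$ of $\GL_2(\cO_\ell)$ we have
\[
\Hom_{\GL_2(\cO_\ell)}\!\left(\widetilde{\rho},\, \Ind_{\mathbf{U}(\cO_\ell)}^{\GL_2(\cO_\ell)}(\theta)\right) \neq 0.
\]
Since $\mathbf{U}(\cO_\ell) \subseteq \SL_2(\cO_\ell) \subseteq \GL_2(\cO_\ell)$, induction in stages yields
\[
\Ind_{\mathbf{U}(\cO_\ell)}^{\GL_2(\cO_\ell)}(\theta) \;\cong\; \Ind_{\SL_2(\cO_\ell)}^{\GL_2(\cO_\ell)}\!\left(\Ind_{\mathbf{U}(\cO_\ell)}^{\SL_2(\cO_\ell)}(\theta)\right),
\]
and Frobenius reciprocity on the outer induction, combined with $\widetilde{\rho}|_{\SL_2(\cO_\ell)} \cong \rho$, gives
\[
\Hom_{\SL_2(\cO_\ell)}\!\left(\rho,\, \Ind_{\mathbf{U}(\cO_\ell)}^{\SL_2(\cO_\ell)}(\theta)\right) \neq 0,
\]
which is precisely the existence of a $\theta$-Whittaker model for $\rho$.

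There is no real obstacle here beyond the dimension bookkeeping; the only point deserving care is the verification that $\widetilde{\rho}|_{\SL_2(\cO_\ell)}$ stays irreducible for the two classes in question (rather than splitting up, as happens for the split non-semisimple regular representations where the dimension in Table~\ref{GL2-reps} is twice that in Table~\ref{SL2-reps}). Once this is pinned down, the rest is a formal restriction-of-scalars argument that transports the $\GL_2$-side Whittaker result to $\SL_2$.
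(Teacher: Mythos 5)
Your proposal is correct and follows essentially the same route as the paper: the paper likewise reads off from Tables~\ref{GL2-reps} and \ref{SL2-reps} that cuspidal and split--semisimple representations of $\GL_2(\cO_\ell)$ restrict irreducibly to $\SL_2(\cO_\ell)$, and then transfers the $\theta$-Whittaker model from Corollary~\ref{cor:hill-regular-iff-whittaker} via induction in stages and Frobenius reciprocity (the same computation as in the first half of the proof of Theorem~\ref{prop:special-regular-characterization}). Your closing remark about checking irreducibility of the restriction, in contrast with the split non-semisimple case, is exactly the point the paper's argument hinges on.
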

This result also extends to $\SL_n(\cO_\ell)$ for $(p,2) = (p,n) = 1$ and we discuss this in  Section~\ref{sec:branching-rules}, see Corollary~\ref{cor:cuspidal-whittaker}.
\section{Restriction from $\GL_n(\cO_\ell)$ to $\SL_n(\cO_\ell)$ }
\label{sec:branching-rules}
In this section, we  prove Theorem~\ref{thm:multiplicity-free-induction}. We also obtain the branching rules for the restriction of regular representations of $\GL_n(\cO_\ell)$ to $\SL_n(\cO_\ell)$ for $(p,2) = (p,n) = 1$. 

\subsection{Proof of Theorem~\ref{thm:multiplicity-free-induction}} Let $\rho$ be a regular representation of $\GL_n(\cO_\ell)$ and let $\delta$ be a regular representation of $\SL_n(\cO_\ell)$ such that $\langle \delta,  \rho \rangle_{\SL_n(\cO_\ell )} \neq 0 $. We need to prove that $\langle \delta,  \rho \rangle_{\SL_n(\cO_\ell )} = 1 $. Let $\sigma$ be a regular representation of $\SL_n(\cO_\ell)$ such that $\langle \sigma , \mathrm{Ind}_{\mathbf U(\cO_\ell)}^{\SL_n(\cO_\ell)}(\theta) \rangle \neq 0$ and $\langle \rho , \mathrm{Ind}_{\SL_n(\cO_\ell)}^{\GL_n(\cO_\ell)}(\sigma) \rangle \neq 0$. The representation $\mathrm{Ind}_{\SL_n(\cO_\ell)}^{\GL_n(\cO_\ell)}(\sigma)$ is a subrepresentation of $\mathrm{Ind}_{\mathbf U(\cO_\ell)}^{\GL_n(\cO_\ell)}(\theta)$ and therefore is multiplicity free by Theorem~\ref{thm:main-theorem}. By the fact that $\langle \delta,  \rho \rangle_{\SL_n(\cO_\ell )} \neq 0 $, $\langle \sigma ,  \rho \rangle_{\SL_n(\cO_\ell )} \neq 0 $ and Clifford theory, we must have $\delta^g = \sigma $ for some $g \in \GL_n(\cO_\ell)$. This in particular implies that both  $\Ind_{\SL_n(\cO_\ell)}^{\GL_n(\cO_\ell)}(\delta )$ and $\Ind_{\SL_n(\cO_\ell)}^{\GL_n(\cO_\ell)}(\sigma )$ are isomorphic as $\GL_n(\cO_\ell)$-representations. Therefore $\Ind_{\SL_n(\cO_\ell)}^{\GL_n(\cO_\ell)}(\delta )$ is multiplicity free. By Frobenius reciprocity, we obtain $\langle \delta,  \rho \rangle_{\SL_n(\cO_\ell )} = 1 $.

\subsection{Branching rules}
For this  we recall the definition of the type of a regular representation of $\GL_n(\cO_\ell)$ and $\SL_n(\cO_\ell)$ as given in \cite{MR3737836}. Now onwards we will always assume $(p, 2) = (p, n) = 1$. 

\begin{definition} A matrix $\tau = (\tau_{d,e}) \in M_n(\mathbb Z_{\geq 0})$ is called $n$-typical if $n= \sum_{d, e} de\tau_{d,e}$. The set of $n$-typical matrices is denoted by $\mathcal{A}_n$. 
\end{definition}
Let $\mathcal{P}(\kk) $ be the set of all monic irreducible polynomials in $\kk[t]$. 
For any $x \in M_n(\kk)$, the characteristic polynomial of $x$ is of the form $\prod_i f_i^{e_i}$ for some finite list of elements $f_i \in \mathcal{P}(\kk) $ with $\sum_i e_i(\deg(f_i)) = n$. Therefore  the polynomial $\prod_i f_i^{e_i}$ determines an $n$-typical matrix $\mathbf{m}^x$ with $\mathbf{m}^x_{d,e} = |\{ i \mid \deg(f_i) = d \,\,\, \mathrm{and}\,\,\, e_i = e   \}|$.   It is well known that the regular elements of $M_n(\kk)$ up to conjugacy are determined uniquely by their characteristic polynomial. By above we have the following map. 
\[
\{ \mathrm{ Regular \,\, conjugacy \,\,  classes \,\, in \,\, } M_n(\kk) \} \to \mathcal{A}_n\,\,; x \mapsto \mathbf{m}^x
\]
We remark that for $q>n$, the map $x \to \mathbf{m}^x$ is a surjection. 
\begin{definition}($\tau$-regular element)
A regular element $x \in M_n(\kk)$ is called $\tau$-regular for $\tau \in \mathcal{A}_n$ if $\mathbf{m}^x = \tau$.
\end{definition}
\begin{definition} ($\tau$-regular representation)
A regular representation $\rho$ of $\G(\cO_\ell)$ is said to be $\tau$-regular if restriction of $\rho$ to $K_\ell^{\ell-1}$ contains $\varphi_x$ for a $\tau$-regular element $x$. 
\end{definition}
For $\tau \in \mathcal{A}_n$, $r \in \mathbb N$ and for a $\tau$-regular $x$, we define the following. 
\[
\begin{array}{ccl}
\mathbf {v} & := & |\GL_n(\kk)|, \\
\mathbf {u}^{\tau} &  := & |C_{\GL_n(\kk)}(x)|, \\
\upiota (\tau, r) &  := & gcd\left( \{  e \mid \exists\,\, d \,\,\mathrm{with}\,\, \tau_{d,e} \neq 0\} \cup \{r \} \right).
\end{array} 
\]

\begin{theorem}[\cite{MR3737836}, Theorems B and E]
\label{thm:regular-dimension}
For $\tau \in \mathcal{A}_n$, the dimension of any $\tau$-regular representation $\rho$ of $G(\cO_\ell)$ for $\ell \geq 2$ is given by the following. 
\[
\dim(\rho) =  \begin{cases}  q^{\binom{n}{2} (\ell-2) }\frac{\mathbf{v}}{\mathbf {u}^{\tau} }, &  \mathrm{for } \,\, \G(\cO_\ell) = \GL_n(\cO_\ell) \, \, \mathrm{with} \,\, (p,2) = 1,  \\ q^{\binom{n}{2} (\ell-2) }\frac{\mathbf{v}}{\upiota(\tau, q-1) \mathbf {u}^{\tau}}, & \mathrm{for} \,\,  \G(\cO_\ell) = \SL_n(\cO_\ell) \, \, \mathrm{with} \,\, (p,2) = (p,n) = 1.
\end{cases} 
\]
\end{theorem}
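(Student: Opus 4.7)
The plan is to combine the explicit dimension formulas coming from the construction recalled in Section~\ref{sec:regular-representation-construction} with a careful computation of $|C_{\G(\cO_m)}(x)|$ for a $\tau$-regular element $x \in \g(\cO_m)$, where $m = \lfloor \ell/2 \rfloor$.

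First I would unify the two parities. By \textbf{E.5} for $\ell = 2m$ and \textbf{O.7}(2) for $\ell = 2m+1$, every $\rho \in \Irr(\G(\cO_\ell) \mid \varphi_x)$ has dimension
\[
\dim(\rho) \;=\; \epsilon(\ell) \cdot \frac{|\G(\cO_\ell)|}{|C_{\G(\cO_m)}(x)|\,|\K_\ell^m|},
\]
with $\epsilon(\ell) = 1$ in the even case and $\epsilon(\ell) = q^{(d_{\g}-d_{\g(\cO_1)}(\bar x))/2}$ in the odd case. Since $\bar x$ is regular, Lemma~\ref{lem:hill-results-regular} together with the values $d_{\g(\cO_1)}(\bar x) = n$ for $\g = M_n$ and $n-1$ for $\g = \sl_n$ recorded in Section~\ref{sec:regular-elements} yield $\epsilon(\ell) = q^{\binom{n}{2}}$ in both groups when $\ell$ is odd. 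Substituting the standard formulas for $|\G(\cO_r)|$ and $|\K_\ell^m| = q^{d_{\g}(\ell-m)}$ reduces the entire problem to computing $|C_{\G(\cO_m)}(x)|$.

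For the centraliser, I would use Lemma~\ref{lem:hill-results-regular} to identify $C_{M_n(\cO_m)}(x)$ with $\cO_m[x] \cong \cO_m[t]/(f_x(t))$, where $f_x$ is the characteristic polynomial of $x$. For a $\tau$-regular $x$, the residue $\bar f_x$ factorises as $\prod_i \bar f_i^{e_i}$ with the multiset $\{(\deg \bar f_i, e_i)\}$ prescribed by $\tau$, and Hensel's lemma lifts this to a decomposition
\[
\cO_m[x] \;\cong\; \prod_i \cO_m[t]/(\tilde f_i^{e_i})
\]
into local $\cO_m$-algebras. For $\G = \GL_n$ one has $C_{\GL_n(\cO_m)}(x) = \cO_m[x]^\times$; working out the unit group in each local factor and comparing with the residue-field centraliser (of order $\mathbf u^\tau$ by definition) gives $|C_{\GL_n(\cO_m)}(x)| = q^{n(m-1)}\mathbf u^\tau$. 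Substituting into the dimension formula and doing the elementary bookkeeping on $q$-exponents produces exactly $\dim(\rho) = q^{\binom{n}{2}(\ell-2)}\mathbf v/\mathbf u^\tau$, settling the $\GL_n$ case.

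For $\G = \SL_n$, the centraliser is the kernel of $\det : \cO_m[x]^\times \to \cO_m^\times$, so
\[
|C_{\SL_n(\cO_m)}(x)| \;=\; \frac{[\cO_m^\times : \det(\cO_m[x]^\times)]\cdot|C_{\GL_n(\cO_m)}(x)|}{|\cO_m^\times|}.
\]
The new ingredient, and the main obstacle, is identifying the index $[\cO_m^\times : \det(\cO_m[x]^\times)]$ with $\upiota(\tau, q-1)$. Via the Hensel decomposition the map $\det$ breaks as a product of norm maps $N_i : (\cO_m[t]/(\tilde f_i^{e_i}))^\times \to \cO_m^\times$. A local analysis shows that $N_i$ kills the principal units $1 + (\tilde f_i)$ because multiplication by such an element has unipotent $\cO_m$-linear matrix, while on the residue quotient $N_i$ factors as the $e_i$-th power of the field norm $\mathbb F_{q^{\deg \bar f_i}}^\times \to \kk^\times$. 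Hence the image of $\det$ in $\kk^\times$ is the subgroup of $\gcd_i(e_i)$-th powers, of index $\gcd(\gcd_i e_i,\, q-1) = \upiota(\tau, q-1)$. The hypothesis $(p,n) = 1$, combined with the fact that $1 + \varpi\cO_m$ is a pro-$p$ group on which every $k$-th-power map with $\gcd(k,p) = 1$ is bijective, ensures that no additional obstruction arises from the congruence filtration. Plugging the resulting expression for $|C_{\SL_n(\cO_m)}(x)|$ into the dimension identity yields $\dim(\rho) = q^{\binom{n}{2}(\ell-2)}\mathbf v/(\upiota(\tau, q-1)\mathbf u^\tau)$, completing the proof.
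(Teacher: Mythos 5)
Your proposal is correct in substance, but it is worth pointing out that the paper does not actually prove Theorem~\ref{thm:regular-dimension}: it imports the statement wholesale from \cite[Theorems~B, E]{MR3737836} and only adds a remark reconciling notation. What you have written is therefore a self-contained derivation from the construction data recalled in Section~\ref{sec:regular-representation-construction}, which is a genuinely different (and, in the context of this paper, more informative) route. Your bookkeeping checks out: $\epsilon(\ell)=q^{\binom{n}{2}}$ in the odd case for both $\g=M_n$ and $\g=\sl_n$ since $(d_{\g}-d_{\g(\cO_1)}(\bar x))/2=(n^2-n)/2$ in either case; $|C_{\GL_n(\cO_m)}(x)|=q^{n(m-1)}\mathbf u^\tau$ follows from $C_{M_n(\cO_m)}(x)=\cO_m[x]$ (Lemma~\ref{lem:hill-results-regular}(4)) together with the fact that the kernel of $\cO_m[x]^\times\to\kk[\bar x]^\times$ has order $q^{n(m-1)}$; and substituting $|\K_\ell^m|=q^{d_\g(\ell-m)}$ recovers the exponent $\binom{n}{2}(\ell-2)$ in both parities. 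The identification of $[\cO_m^\times:\det(\cO_m[x]^\times)]$ with $\upiota(\tau,q-1)$ is the real content of the $\SL_n$ case, and your norm analysis gives it.

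Two small points deserve tightening. First, the Hensel factorization produces $\cO_m[x]\cong\prod_i\cO_m[t]/(g_i)$ where $g_i\equiv \bar f_i^{\,e_i}\pmod{\wp}$, but $g_i$ need not be a literal $e_i$-th power of a lift of $\bar f_i$; your unipotence argument survives because a lift $\tilde f_i$ still generates a nilpotent ideal in $\cO_m[t]/(g_i)$ (as $\tilde f_i^{\,e_i}\in\wp\cdot\cO_m[t]/(g_i)$), but the cleanest computation of the residual image is to reduce mod $\wp$ first and run the filtration argument over $\kk[t]/(\bar f_i^{\,e_i})$, which immediately gives image $(\kk^\times)^{e_i}$. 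Second, for the claim that the congruence filtration contributes no further index, the quickest argument is that the scalars already give $\det(c)=c^n$, so the image of $\det$ contains $(1+\wp\cO_m)^n=1+\wp\cO_m$ because $(p,n)=1$ and $1+\wp\cO_m$ is a $p$-group; this is the precise place where the hypothesis $(p,n)=1$ enters, and it is worth stating explicitly rather than gesturing at bijectivity of power maps. With those two clarifications your argument is a complete proof and could stand in place of the citation.
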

The proof of the above result is included in \cite[Thereoms~B, E]{MR3737836}. In the following remark we  clarify the differences that arise in the present formulation of above result and that of the corresponding statements in \cite{MR3737836}.
\begin{remark}
\begin{enumerate} 
 \item Our stated results are for $\G(\cO_\ell)$ and the results of \cite{MR3737836} are for $\G(\cO_{\ell+1})$. So $\ell$ is modified appropriately. 
 \item The statement of \cite[Theorem B]{MR3737836} that gives above result for $\G(\cO_\ell) = \GL_n(\cO_\ell)$, has the hypothesis $q>n$. However the remark after the statement of Theorem~B there clarifies that the above result is true even for $q \leq n$. The hypothesis $q > n$ is required there for some other counting arguments which we will never use, so we have omitted the condition $q > n$. 
 \item The proof of Theorem~\ref{thm:regular-dimension} for $\G(\cO_\ell) = \SL_n(\cO_\ell)$ is part of the proof of \cite[Theorem E]{MR3737836}. 
 \end{enumerate} 
\end{remark}

\begin{corollary}
\label{cor:dimensions-of-regular}
The dimension of any $\tau$-regular representation of $\SL_n(\cO_\ell)$ is obtained by dividing the dimension of a $\tau$-regular representation of $\GL_n(\cO_\ell)$ by $\upiota(\tau, q-1)$. 
\end{corollary}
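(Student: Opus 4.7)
The corollary is an immediate arithmetic consequence of Theorem~\ref{thm:regular-dimension}, so the plan is a very short one. I would simply write out the two formulas side by side. For a $\tau$-regular representation $\rho_{\GL}$ of $\GL_n(\cO_\ell)$ and a $\tau$-regular representation $\rho_{\SL}$ of $\SL_n(\cO_\ell)$, Theorem~\ref{thm:regular-dimension} gives
\[
\dim(\rho_{\GL}) = q^{\binom{n}{2}(\ell-2)} \frac{\mathbf{v}}{\mathbf{u}^{\tau}}, \qquad \dim(\rho_{\SL}) = q^{\binom{n}{2}(\ell-2)} \frac{\mathbf{v}}{\upiota(\tau,q-1)\,\mathbf{u}^{\tau}}.
\]

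The second step is to observe that both expressions share the common factor $q^{\binom{n}{2}(\ell-2)} \mathbf{v}/\mathbf{u}^{\tau}$, so dividing one by the other immediately yields
\[
\dim(\rho_{\SL}) = \frac{\dim(\rho_{\GL})}{\upiota(\tau,q-1)},
\]
which is exactly the claim of the corollary.

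There is essentially no obstacle here: the only ingredient beyond the two formulas already stated in Theorem~\ref{thm:regular-dimension} is the implicit hypothesis $(p,2) = (p,n) = 1$, which guarantees that both statements of Theorem~\ref{thm:regular-dimension} apply simultaneously and that the notion of a $\tau$-regular representation of $\SL_n(\cO_\ell)$ makes sense under Definition~\ref{def:regular-over-field}. So the proof is a one-line arithmetic comparison and does not require any further machinery.
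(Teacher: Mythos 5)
Your proof is correct and matches the paper's (implicit) argument exactly: the corollary is stated right after Theorem~\ref{thm:regular-dimension} precisely because it is the one-line quotient of the two dimension formulas, under the standing hypothesis $(p,2)=(p,n)=1$. Nothing further is needed.
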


\begin{definition}(special-regular representation of $\SL_n(\cO_\ell )$) A regular representation $\delta$ of $\SL_n(\cO_\ell)$ is called special regular if and only if $\delta \cong \mathrm{Res}^{\GL_n(\cO_\ell)}_{\SL_n(\cO_\ell)}(\rho)$ for a regular representation $\rho$ of $\GL_n(\cO_\ell)$. 
\end{definition}
By Corollary~\ref{cor:dimensions-of-regular} and the definition of special-regular, we obtain the following. 
\begin{lemma}
A $\tau$-regular representation of $\SL_n(\cO_\ell)$ is special-regular if and only if $\upiota(\tau, (q-1)) = 1$. 
\end{lemma}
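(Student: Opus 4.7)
The plan is to prove the biconditional by a direct dimension count, leveraging Clifford theory for the normal inclusion $\SL_n(\cO_\ell) \trianglelefteq \GL_n(\cO_\ell)$, Theorem~\ref{thm:multiplicity-free-induction}, and Corollary~\ref{cor:dimensions-of-regular}. For the forward direction, suppose $\delta = \mathrm{Res}^{\GL_n(\cO_\ell)}_{\SL_n(\cO_\ell)}(\rho)$ for some regular representation $\rho$ of $\GL_n(\cO_\ell)$. I first observe that $\rho$ must be $\tau$-regular of $\GL_n(\cO_\ell)$: the one-dimensional characters $\varphi_x$ of $\K_\ell^{\ell-1}$ appearing in $\rho|_{\K_\ell^{\ell-1}}$ are the same as those appearing in $\delta|_{\K_\ell^{\ell-1}}$, and the associated $x$ is $\tau$-regular by hypothesis. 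Since $\delta$ is irreducible and equals $\mathrm{Res}(\rho)$, we have $\dim(\rho) = \dim(\delta)$, and Corollary~\ref{cor:dimensions-of-regular} then forces $\upiota(\tau, q-1) = 1$.

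For the backward direction, assume $\upiota(\tau, q-1) = 1$ and let $\delta$ be an arbitrary $\tau$-regular representation of $\SL_n(\cO_\ell)$. By Clifford theory applied to the normal inclusion $\SL_n(\cO_\ell) \trianglelefteq \GL_n(\cO_\ell)$, there exists an irreducible $\rho \in \mathrm{Irr}(\GL_n(\cO_\ell))$ with $\langle \delta, \mathrm{Res}(\rho) \rangle_{\SL_n(\cO_\ell)} \neq 0$. Again $\rho$ is automatically $\tau$-regular, as the characters of $\K_\ell^{\ell-1}$ appearing in $\rho$ are in the $\GL_n(\cO_\ell)$-orbit of those in $\delta$, and the $\GL_n(\cO_\ell)$-conjugation action preserves the characteristic polynomial and hence $\tau$. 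By Theorem~\ref{thm:multiplicity-free-induction}, we can write
\[
\mathrm{Res}^{\GL_n(\cO_\ell)}_{\SL_n(\cO_\ell)}(\rho) \;=\; \bigoplus_{i=1}^{s} \delta_i,
\]
with $\delta_1 = \delta$ and the $\delta_i$ pairwise non-isomorphic. By Clifford theory the $\delta_i$ are $\GL_n(\cO_\ell)$-conjugates of one another, so they all have equal dimension and share the same combinatorial type $\tau$.

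Taking dimensions and applying Theorem~\ref{thm:regular-dimension}, we obtain
\[
q^{\binom{n}{2}(\ell-2)} \frac{\mathbf{v}}{\mathbf{u}^\tau} \;=\; \dim(\rho) \;=\; s \cdot \dim(\delta) \;=\; s \cdot q^{\binom{n}{2}(\ell-2)} \frac{\mathbf{v}}{\upiota(\tau, q-1)\,\mathbf{u}^\tau},
\]
so that $s = \upiota(\tau, q-1)$. Under the hypothesis $\upiota(\tau, q-1) = 1$, this forces $s = 1$, whence $\mathrm{Res}(\rho) = \delta$ and $\delta$ is special-regular by definition. This same calculation, run backward, also re-derives the forward direction cleanly: $s = 1$ (i.e. $\mathrm{Res}(\rho)$ irreducible) holds precisely when $\upiota(\tau, q-1) = 1$. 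There is no real obstacle here; the only point that requires mild care is justifying that the irreducible $\rho$ above $\delta$ is $\tau$-regular of $\GL_n(\cO_\ell)$ with the same $\tau$, which is immediate from the fact that the $\GL_n(\cO_\ell)$-action on $\widehat{\K_\ell^{\ell-1}} \cong \g(\kk)$ is by conjugation and preserves the characteristic polynomial.
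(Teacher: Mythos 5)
Your proof is correct and follows essentially the same route as the paper, which justifies the lemma in one line by comparing dimensions via Corollary~\ref{cor:dimensions-of-regular} and the definition of special-regular. Your write-up simply makes explicit the Clifford-theoretic bookkeeping (that the constituents of $\mathrm{Res}^{\GL_n(\cO_\ell)}_{\SL_n(\cO_\ell)}(\rho)$ are conjugate, equidimensional, and of the same type $\tau$) that the paper leaves implicit, and the extra observation $s=\upiota(\tau,q-1)$ is consistent with Theorem~\ref{thm:branching-numbers}.
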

 
\begin{theorem}
\label{prop:special-regular-characterization} A regular representation $\delta$ of $\SL_n(\cO_\ell)$ admits a  $\theta$-Whittaker model for every non-degenerate character $\theta$ of $\mathbf U(\cO_\ell)$ if and only if $\delta$ is special-regular. 
\end{theorem}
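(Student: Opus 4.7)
The plan is to exploit Theorem~\ref{thm: existence}(1) and reformulate the condition in terms of regular $\SL_n(\cO_m)$-orbits in $\g(\cO_m)$. First, since every non-degenerate character $\theta$ of $\mathbf U(\cO_\ell)$ is conjugate under the split torus of $\SL_n(\cO_\ell)$ to some $\theta_b$ --- a direct diagonal-conjugation calculation shows $\theta_{(\alpha_1,\ldots,\alpha_{n-1})}^{\diag(d_1,\ldots,d_n)} = \theta_{(d_1^{-1}d_2\alpha_1,\,d_2^{-1}d_3\alpha_2,\ldots)}$, and one can solve for $d_i \in \cO_\ell^\times$ with $\prod d_i = 1$ so that every component but the first becomes $1$ --- and since induction from $\SL_n(\cO_\ell)$-torus-conjugate characters yields isomorphic $\SL_n(\cO_\ell)$-representations, the condition ``$\delta$ admits a $\theta$-Whittaker model for every non-degenerate $\theta$'' becomes ``$\delta$ admits a $\theta_a$-Whittaker model for every $a \in \cO_\ell^\times$''. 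Using Theorem~\ref{thm: existence}(1) together with the surjectivity of the reduction map $\cO_\ell^\times \to \cO_m^\times$, this is in turn equivalent to the following: the $\SL_n(\cO_m)$-orbit of a regular element $x \in \g(\cO_m)$ associated to $\delta$ contains an $\bar a$-regular representative for every $\bar a \in \cO_m^\times$.

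Second, I would identify precisely which $\bar a$ appear. Assume $x$ is itself $\bar a_0$-regular. For a fixed $\bar a \in \cO_m^\times$ write $x_{\bar a}$ for the unique $\bar a$-regular matrix with the same characteristic polynomial as $x$; a direct matrix computation yields $d \cdot x \cdot d^{-1} = x_{\bar a}$ for $d = \diag(\bar a_0/\bar a, 1, \ldots, 1) \in \GL_n(\cO_m)$. Any element $h \in \SL_n(\cO_m)$ conjugating $x$ to $x_{\bar a}$ therefore lies in $d \cdot C_{\GL_n(\cO_m)}(x)$, and the constraint $\det(h) = 1$ forces $\bar a/\bar a_0 \in \Delta$, where $\Delta := \det\bigl(C_{\GL_n(\cO_m)}(x)\bigr) \subseteq \cO_m^\times$. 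Conversely, mimicking the argument in the proof of Lemma~\ref{lem:about-regular-elements}(2) produces such an $h$ for every $\bar a \in \bar a_0 \Delta$. Thus the set of $\bar a$ for which the orbit contains an $\bar a$-regular element is exactly the coset $\bar a_0 \Delta$, and the Whittaker condition reduces to $\Delta = \cO_m^\times$.

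The main technical obstacle is the norm computation showing $[\cO_m^\times : \Delta] = \upiota(\tau, q-1)$. By Lemma~\ref{lem:hill-results-regular}, $C_{\GL_n(\cO_m)}(x) = (\cO_m[x])^\times$, and $\det$ coincides on it with the algebra norm $N : (\cO_m[x])^\times \to \cO_m^\times$. Lifting the factorization $\prod_i \bar f_i^{e_i}$ of the characteristic polynomial of $\bar x$ over $\kk$ to a factorization $\prod_i f_i^{e_i}$ over $\cO_m$ via Hensel's lemma (the factors $\bar f_i^{e_i}$ are pairwise coprime), one obtains $\cO_m[x] \cong \prod_i \cO_m[t]/(f_i^{e_i})$ and $N$ factors accordingly. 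A filtration by powers of the maximal ideal on each local factor shows that the reduction of $N$ on $(\cO_m[t]/(f_i^{e_i}))^\times$ modulo $\varpi$ is $\bar u \mapsto N_{\mathbb F_{q^{\deg f_i}}/\mathbb F_q}(\bar u)^{e_i}$; since the finite-field norm is surjective, the image of $N$ in $\kk^\times$ equals $(\kk^\times)^{\gcd_i(e_i)} = (\kk^\times)^{\upiota(\tau, q-1)}$. On the principal units $1 + \varpi \cO_m$, the identity $N(1 + \varpi v) \equiv 1 + \varpi\,\tr(m_v) \pmod{\varpi^2}$ combined with the invertibility of $n$ in $\cO_m$ (from $(p,n) = 1$) yields surjectivity by a Hensel-style iteration. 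Combining, $[\cO_m^\times : \Delta] = \upiota(\tau, q-1)$, so $\Delta = \cO_m^\times$ iff $\upiota(\tau, q-1) = 1$, which by the lemma preceding the theorem is exactly the condition that $\delta$ be special-regular.
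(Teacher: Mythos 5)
Your proposal is correct, but it takes a genuinely different route from the paper. The paper's own proof is soft: for the forward direction it writes $\delta \cong \mathrm{Res}^{\GL_n(\cO_\ell)}_{\SL_n(\cO_\ell)}(\rho)$ and applies Frobenius reciprocity together with Corollary~\ref{cor:hill-regular-iff-whittaker} and Theorem~\ref{thm:main-theorem} to get $\langle \delta, \mathrm{Ind}^{\SL_n(\cO_\ell)}_{\mathbf U(\cO_\ell)}\theta\rangle = \langle \rho, \mathrm{Ind}^{\GL_n(\cO_\ell)}_{\mathbf U(\cO_\ell)}\theta\rangle = 1$; for the converse it observes that a non-special-regular $\delta$ has a sibling constituent $\delta'$ inside some $\mathrm{Res}(\rho)$, that $\delta'$ admits a $\theta$-Whittaker model for some non-degenerate $\theta$ by Theorem~\ref{main-theorem-2}, and that if $\delta$ did too then $\rho$ would occur with multiplicity $\geq 2$ in $\mathrm{Ind}^{\GL_n(\cO_\ell)}_{\mathbf U(\cO_\ell)}(\theta)$, contradicting Theorem~\ref{thm:main-theorem}. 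You instead unwind Theorem~\ref{thm: existence}(1) to the level of orbits: the condition becomes that the $\SL_n(\cO_m)$-orbit of $x$ meets every $\bar a$-regular form, and you identify the admissible $\bar a$ as the coset $\bar a_0\,\det\bigl(C_{\GL_n(\cO_m)}(x)\bigr)$, then compute that this determinant image has index $\upiota(\tau,q-1)$ in $\cO_m^\times$ via the algebra norm of $\cO_m[x]$. This is longer and leans on a nontrivial norm computation, but it buys strictly more: it tells you \emph{exactly which} $\theta_a$-Whittaker models a non-special regular representation admits (a coset of an index-$\upiota(\tau,q-1)$ subgroup), and it recovers the count $\upiota(\tau,q-1)$ of Theorem~\ref{thm:branching-numbers} directly from orbit combinatorics rather than from the dimension formulas of \cite{MR3737836}. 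All the key steps check out (the reduction to $\theta_a$ over the determinant-one torus, the uniqueness of the $\bar a$-regular representative in a similarity class, $C_{\GL_n(\cO_m)}(x)=\cO_m[x]^\times$ with $\det$ equal to the algebra norm via the cyclic vector, the residue computation of the norm image, and surjectivity on principal units using $(p,n)=1$). One cosmetic imprecision: Hensel's lemma lifts $\prod_i \bar f_i^{\,e_i}$ to a product of pairwise coprime factors $g_i$ congruent to $\bar f_i^{\,e_i}$ modulo $\varpi$, not necessarily to literal $e_i$-th powers $f_i^{e_i}$; this is harmless since your norm computation on each factor only uses $g_i \bmod \varpi$.
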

\begin{proof}
Let $\delta$ be a special-regular representations of $\SL_n(\cO_\ell)$, then $\delta \cong  \mathrm{Res}^{\GL_n(\cO_\ell)}_{\SL_n(\cO_\ell)}(\rho)$ for a  regular representation $\rho$ of $\GL_n(\cO_\ell)$. Let $\theta$ be a non-degenerate character of $\mathbf U(\cO_\ell)$. Then by Corollary~\ref{cor:hill-regular-iff-whittaker}, we have 
\[
 \langle  \delta , \mathrm{Ind}^{\SL_n(\cO_\ell)}_{\mathbf U(\cO_\ell)}\theta \rangle = \langle \mathrm{Res}^{\GL_n(\cO_\ell)}_{\SL_n(\cO_\ell)}(\rho), \mathrm{Ind}^{\SL_n(\cO_\ell)}_{\mathbf U(\cO_\ell)}\theta \rangle = \langle \rho, \mathrm{Ind}^{\GL_n(\cO_\ell)}_{\mathbf U(\cO_\ell)}\theta \rangle = 1. 
\]

Conversely, let $\delta$ be a regular representation of $\SL_n(\cO_\ell)$ such that $\delta$ is not a special-regular representation of $\SL_n(\cO_\ell)$ and admits a $\theta$-Whittaker model for every non-degenerate $\theta$. Then there exists $\delta'$ (possibly equivalent to $\delta$) such that both $\delta$ and $\delta'$ are regular representations of $\SL_n(\cO_\ell)$ and $\delta \oplus \delta'$ is a subrepresentation of $\mathrm{Res}^{\GL_n(\cO_\ell)}_{\SL_n(\cO_\ell)}(\rho)$ for some regular representation $\rho$ of $\GL_n(\cO_\ell)$. The representation $\delta'$ is a regular representation of $\SL_n(\cO_\ell)$, therefore by  Theorem~\ref{main-theorem-2} there exists a non-degenerate character $\theta$ such that $\delta'$ admits a $\theta$-Whittaker model. Then by our hypothesis, 
\[
\langle \rho, \mathrm{Ind}_{\mathbf U(\cO_\ell)} ^{\GL_n(\cO_\ell)} (\theta) \rangle \geq  \langle \delta \oplus \delta' , \mathrm{Ind}_{\mathbf U(\cO_\ell)} ^{\SL_n(\cO_\ell)}(\theta) \rangle = 2.
\]
This is a contradiction to Theorem~\ref{thm:main-theorem}. 
\end{proof}

Recall a regular element $x$ is called cuspidal if the characteristic polynomial of $x$ is irreducible and cuspidal representations of $\G(\cO_\ell)$ are the ones that lie above the cuspidal characters. 
A regular element $x$ is called split-semisimple if the characteristic polynomial splits over $\mathbf F_q$ and has distinct roots. Parallel to the cuspidal representations, the split semisimple representation of $\G(\cO_\ell)$ are defined. Both the cuspidal and split semisimple representations of $\SL_n(\cO_\ell)$ are special regular. This along with Theorem~\ref{prop:special-regular-characterization} directly gives the following. 
\begin{corollary}
\label{cor:cuspidal-whittaker}
Let $\rho$ be a cuspidal or a split semisimple representation of $\SL_n(\cO_\ell)$ for $(p,2) = (p,n) = 1$. Then $\rho$ admits a $\theta$-Whittaker model for every non-degenerate character $\theta$ of $\mathbf U(\cO_\ell)$. 
\end{corollary}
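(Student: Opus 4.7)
The plan is to reduce the corollary to Theorem~\ref{prop:special-regular-characterization} by showing that cuspidal and split semisimple regular representations of $\SL_n(\cO_\ell)$ are automatically special regular. The key technical ingredient is the criterion immediately preceding Theorem~\ref{prop:special-regular-characterization}: a $\tau$-regular representation of $\SL_n(\cO_\ell)$ is special regular if and only if $\upiota(\tau, q-1) = 1$. So everything reduces to a computation of the invariant $\upiota(\tau, q-1)$ for the types $\tau$ associated to cuspidal and split semisimple elements.

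First I would unpack the combinatorics. Given a regular $x \in M_n(\kk)$ with characteristic polynomial $\prod_i f_i^{e_i}$, the $n$-typical matrix $\mathbf{m}^x$ records how many factors of a given degree and multiplicity appear. If $x$ is cuspidal then its characteristic polynomial is a single irreducible polynomial of degree $n$, so $\mathbf{m}^x_{n,1} = 1$ and all other entries vanish; the set $\{e \mid \exists d, \tau_{d,e} \neq 0\}$ equals $\{1\}$. If $x$ is split semisimple then the characteristic polynomial is a product of $n$ distinct linear factors, so $\mathbf{m}^x_{1,1} = n$ and again the relevant set of exponents is $\{1\}$. In both cases, therefore,
\[
\upiota(\tau, q-1) \;=\; \gcd(\{1\} \cup \{q-1\}) \;=\; 1.
\]

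Next I would apply the lemma to conclude that every cuspidal or split semisimple regular representation $\rho$ of $\SL_n(\cO_\ell)$ is special regular, i.e., is obtained as the restriction of some regular representation of $\GL_n(\cO_\ell)$. Finally, I would invoke Theorem~\ref{prop:special-regular-characterization}, which asserts that being special regular is equivalent to admitting a $\theta$-Whittaker model for \emph{every} non-degenerate character $\theta$ of $\mathbf U(\cO_\ell)$. This yields the corollary.

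There is essentially no obstacle here; the entire content of the corollary is the observation that cuspidal and split semisimple types $\tau$ are supported in the column $e=1$ of the typical matrix, which makes $\upiota(\tau, q-1)$ trivially equal to $1$. The only care needed is to verify the definitions of cuspidal and split semisimple are correctly translated into the language of $n$-typical matrices, which is immediate from the definitions given just before Corollary~\ref{cor:cuspidal-whittaker}.
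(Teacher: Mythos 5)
Your proposal is correct and follows the paper's own route exactly: the paper likewise observes that cuspidal and split semisimple representations are special regular (which is precisely your $\upiota(\tau,q-1)=1$ computation, since both types have all exponents $e=1$) and then invokes Theorem~\ref{prop:special-regular-characterization}. You merely spell out the $\gcd$ calculation that the paper leaves implicit.
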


Next, we describe the branching rules for $\mathrm{Res}^{\GL_n(\cO_\ell)}_{\SL_n(\cO_\ell)}(\rho)$ for every regular representation $\rho$ of $\GL_n(\cO_\ell)$. 

\begin{theorem}
\label{thm:branching-numbers} For $(p, 2) = (p, n) = 1$. Let $\rho$ be a $\tau$-regular  representation of $\GL_n(\cO_\ell)$ for $\tau \in \mathcal{A}_n$. Then 
\[
\mathrm{End}_{\SL_n(\cO_\ell)} \left( \mathrm{Res}_{\SL_n(\cO_\ell)}^{\GL_n(\cO_\ell)} (\rho)  \right)  \cong \mathbb C^{ \upiota(\tau, q-1) } . 
\]
 \end{theorem}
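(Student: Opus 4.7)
The plan is to combine the multiplicity-freeness result from Theorem~\ref{thm:multiplicity-free-induction} with the explicit dimension formula from Theorem~\ref{thm:regular-dimension}. Since $\SL_n(\cO_\ell)$ is a normal subgroup of $\GL_n(\cO_\ell)$ with abelian quotient isomorphic to $\cO_\ell^\times$, standard Clifford theory tells us that $\mathrm{Res}^{\GL_n(\cO_\ell)}_{\SL_n(\cO_\ell)}(\rho)$ decomposes as a direct sum of $\GL_n(\cO_\ell)$-conjugate irreducible constituents, each appearing with the same multiplicity. Theorem~\ref{thm:multiplicity-free-induction} forces this multiplicity to be $1$, so we can write
\[
\mathrm{Res}^{\GL_n(\cO_\ell)}_{\SL_n(\cO_\ell)}(\rho) \cong \delta_1 \oplus \delta_2 \oplus \cdots \oplus \delta_k
\]
with the $\delta_i$ pairwise inequivalent irreducible representations of $\SL_n(\cO_\ell)$. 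In particular, $\mathrm{End}_{\SL_n(\cO_\ell)}(\mathrm{Res}(\rho)) \cong \mathbb{C}^k$, so it suffices to prove $k = \upiota(\tau, q-1)$.

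The key observation in the next step is that all $\delta_i$ are $\tau$-regular representations of $\SL_n(\cO_\ell)$. Indeed, since $\rho$ is $\tau$-regular, its restriction to $\K_\ell^{\ell-1} \cong \g(\kk)$ contains some $\varphi_x$ with $x \in M_n(\kk)$ being $\tau$-regular; viewing $\rho|_{\K_\ell^{\ell-1}}$ through $\SL_n(\cO_\ell)$ one sees that each $\delta_i$ lies above a character $\varphi_{y_i}$ for some $y_i \in \sl_n(\kk)$ which is the image of a $\GL_n(\kk)$-conjugate of $x$ shifted by a scalar. Since both conjugation by $\GL_n(\kk)$ and adding a scalar matrix preserve the factorization type of the characteristic polynomial (the conjugation trivially, the scalar shift because it only translates the roots), every $y_i$ is still $\tau$-regular. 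Hence each $\delta_i$ is a $\tau$-regular representation of $\SL_n(\cO_\ell)$ in the sense of the earlier definition.

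Now we apply the dimension formulas of Theorem~\ref{thm:regular-dimension}. Counting dimensions on both sides of the decomposition, and using that $\dim(\delta_i)$ is the same for all $i$, we get
\[
\dim(\rho) = k \cdot \dim(\delta_1) = k \cdot \frac{\dim(\rho)}{\upiota(\tau, q-1)},
\]
where we used $\dim(\rho) = q^{\binom{n}{2}(\ell-2)}\mathbf{v}/\mathbf{u}^\tau$ and $\dim(\delta_1) = q^{\binom{n}{2}(\ell-2)}\mathbf{v}/(\upiota(\tau, q-1)\mathbf{u}^\tau)$. Dividing gives $k = \upiota(\tau, q-1)$, and the theorem follows.

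The step most worth scrutinising is the identification of the constituents $\delta_i$ as $\tau$-regular, and specifically the stability of the invariant $\mathbf{m}^x$ under both $\GL_n(\kk)$-conjugation and the scalar translations coming from writing an element of $M_n(\kk)$ in the form (scalar) $+$ (traceless part); this must be checked carefully since it is what couples the regularity type in $M_n$ to the regularity type in $\sl_n$. Once this is in place the rest is bookkeeping, and the endomorphism-algebra dimension drops out of the ratio of dimensions already computed in \cite{MR3737836}.
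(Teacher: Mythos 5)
Your proof is correct and follows essentially the same route as the paper's: multiplicity-freeness from Theorem~\ref{thm:multiplicity-free-induction}, Clifford theory to see that the constituents are pairwise inequivalent $\tau$-regular representations of $\SL_n(\cO_\ell)$ of equal dimension, and the dimension comparison from Corollary~\ref{cor:dimensions-of-regular} to count them. The only difference is that you spell out why the constituents remain $\tau$-regular (invariance of $\mathbf{m}^x$ under $\GL_n(\kk)$-conjugation and scalar shift), a point the paper leaves implicit.
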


 \begin{proof}
 By Theorem~\ref{thm:multiplicity-free-induction}, the representation $\mathrm{Res}_{\SL_n(\cO_\ell)}^{\GL_n(\cO_\ell)} (\rho)$ is multiplicity free. The group $\SL_n(\cO_\ell)$ is a normal subgroup of $\GL_n(\cO_\ell)$, therefore by Clifford theory  all the constituents of $\mathrm{Res}_{\SL_n(\cO_\ell)}^{\GL_n(\cO_\ell)} (\rho)$ must be of the same dimension and also must be $\tau$-regular representations of $\SL_n(\cO_\ell)$. Therefore by Corollary~\ref{cor:dimensions-of-regular}, the isomorphism of  $\mathrm{End}_{\SL_n(\cO_\ell)} \left( \mathrm{Res}_{\SL_n(\cO_\ell)}^{\GL_n(\cO_\ell)} (\rho)  \right) $ and $\mathbb C^{ \upiota(\tau, q-1) } $ follows. 
 \end{proof}
The following is a direct corollary of Theorem~\ref{thm:branching-numbers} by the fact that 
$\upiota(\tau, q-1) \leq n$ for every $\tau \in \mathcal{A}_n$.
 \begin{corollary}
 \label{cor:number-of-constituents}
For $(p, 2) = (p, n) = 1$. Let $\rho$ be a regular  representation of $\GL_n(\cO_\ell)$. Then $\mathrm{Res}_{\SL_n(\cO_\ell)}^{\GL_n(\cO_\ell)}(\rho)$ has at most $n$ irreducible constituents.
 \end{corollary}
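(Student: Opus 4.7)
The plan is to read off the statement directly from Theorem~\ref{thm:branching-numbers}, reducing the claim to a purely combinatorial inequality about the invariant $\upiota(\tau,q-1)$. Suppose $\rho$ is a regular representation of $\GL_n(\cO_\ell)$; then $\rho$ is $\tau$-regular for a unique $\tau\in\mathcal{A}_n$. By Theorem~\ref{thm:multiplicity-free-induction}, the restriction $\mathrm{Res}_{\SL_n(\cO_\ell)}^{\GL_n(\cO_\ell)}(\rho)$ is multiplicity free, so the number of its irreducible constituents equals $\dim_{\C}\mathrm{End}_{\SL_n(\cO_\ell)}\bigl(\mathrm{Res}_{\SL_n(\cO_\ell)}^{\GL_n(\cO_\ell)}(\rho)\bigr)$. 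Theorem~\ref{thm:branching-numbers} identifies this endomorphism algebra with $\C^{\upiota(\tau,q-1)}$, so it suffices to establish $\upiota(\tau,q-1)\le n$.

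For the combinatorial estimate, I would unpack the definition of $\upiota(\tau,q-1)$ as the gcd of $\{e\mid \exists\,d\text{ with }\tau_{d,e}\neq 0\}\cup\{q-1\}$ and note that the $n$-typicality constraint $n=\sum_{d,e}de\tau_{d,e}$ forces every exponent $e$ occurring with $\tau_{d,e}\neq 0$ to satisfy $de\tau_{d,e}\le n$, hence $e\le n$ since $d,\tau_{d,e}\ge 1$. Because the gcd of a set of positive integers is bounded by any of its members, we conclude $\upiota(\tau,q-1)\le \min\{e\mid \exists\,d,\,\tau_{d,e}\neq 0\}\le n$, which finishes the argument.

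There is essentially no obstacle here: once Theorem~\ref{thm:branching-numbers} is in hand, the only content is the observation that $n$-typicality of $\tau$ caps the exponents $e$ by $n$. The one degenerate case worth a brief sanity check is when $\tau$ has $\tau_{d,e}\neq 0$ only for $d=n$, $e=1$ (a cuspidal $\tau$), where $\upiota(\tau,q-1)=\gcd(1,q-1)=1\le n$; all other $\tau$ are handled uniformly by the bound above.
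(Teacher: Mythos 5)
Your proposal is correct and follows essentially the same route as the paper: the paper deduces the corollary directly from Theorem~\ref{thm:branching-numbers} together with the fact that $\upiota(\tau,q-1)\le n$ for every $\tau\in\mathcal{A}_n$, which is exactly your reduction. The only difference is that you spell out the (easy) verification that $n$-typicality forces every occurring exponent $e$ to satisfy $e\le n$, a fact the paper asserts without proof; the paper also remarks on an alternative argument via the index of $Z(\GL_n(\cO_\ell))\SL_n(\cO_\ell)$ in $\GL_n(\cO_\ell)$, but that is not its primary proof.
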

\begin{remark} 
\begin{enumerate} 
\item We remark that Corollary~\ref{cor:number-of-constituents} can also be obtained by considering the index of the group  $Z(\GL_n(\cO_\ell))\SL_n(\cO_\ell)$ in $\GL_n(\cO_\ell)$, where $Z(\GL_n(\cO_\ell))$ denotes the centre of the group $\GL_n(\cO_\ell)$. 
\item Both of the above mentioned proofs of Corollary~\ref{cor:number-of-constituents}  very much depend on the condition $(p, n) = 1$. It is natural to ask whether this result continues to be true for general $p$ and $n$? 
\end{enumerate} 
\end{remark}

\noindent {\bf Acknowledgements: } Authors immensely thank Dipendra Prasad  for several helpful discussions regarding this paper and are also grateful to Sandeep Varma for his comments that helped in improving the current article. SPP acknowledges the support provided by Inspire Faculty Award [IFA17-MA102], DST, Government of India and PS acknowledges the support of UGC Centre for Advanced Study grant.

\bibliography{refs}{}
\bibliographystyle{siam}

\end{document}